\DeclareMathOperator{\Aut}{Aut}
\DeclareMathOperator{\Chow}{Chow}
\DeclareMathOperator{\rank}{rk}
\DeclareMathOperator{\ch}{ch}
\DeclareMathOperator{\td}{td}
\DeclareMathOperator{\tr}{tr}
\DeclareMathOperator{\rk}{rk}
\DeclareMathOperator{\sym}{Sym}
\DeclareMathOperator{\Ext}{Ext}
\DeclareMathOperator{\diag}{diag}
\newcommand{\CW}{{\rm CW}}
\newcommand{\CM}{{\rm CM}}
\theoremstyle{plain}
\newtheorem{thm}{Theorem}[section]
\newtheorem{prop}[thm]{Proposition}
\newtheorem{cor}[thm]{Corollary}
\theoremstyle{definition}
\newtheorem{defn}[thm]{Definition}
\theoremstyle{remark}
\newtheorem{ex}[thm]{Example}
\newtheorem{rem}[thm]{Remark}
\begin{document}

\title[Scalar curvature and stability of projective bundles and blowups]{Scalar curvature and asymptotic Chow stability of projective bundles and blowups}
\date{\today}

\author{Alberto Della Vedova}
\address{Fine Hall \\ Princeton University \\ Princeton, NJ 08544 USA}
\email{della@math.princeton.edu}

\author{Fabio Zuddas}
\address{Dipartimento di Matematica \\
         Universit\`a degli Studi di Parma \\ 
         Viale G. P. Usberti, 53/A  \\ 
         43100 Parma (Italy)}
\email{fabio.zuddas@unipr.it}

\begin{abstract}
The holomorphic invariants introduced by Futaki as obstruction to the asymptotic Chow semistability are studied by an algebraic-geometric point of view and are shown to be the Mumford weights of suitable line bundles on the Hilbert scheme of $\mathbb P^n$.

\noindent These invariants are calculated in two special cases. The first is a projective bundle $\mathbb P(E)$ over a curve of genus $g \geq 2$, and it is shown that it is asymptotically Chow polystable (with every polarization) if and only the bundle $E$ is slope polystable. This proves a conjecture of Morrison with the extra assumption that the involved polarization is sufficiently divisible. Moreover it implies that $\mathbb P(E)$ is asymptotically Chow polystable (with every polarization) if and only if it admits a constant scalar curvature K\"ahler metric. The second case is a manifold blown-up at points, and new examples of asymptotically Chow unstable constant scalar curvature K\"ahler classes are given.
\end{abstract}

\maketitle

\section[Introduction]{Introduction}

The existence on a compact complex manifold of a constant scalar curvature K\"ahler (cscK) metric is a central problem in K\"ahler geometry and has been approached with a variety of geometric and analytical methods. While one can look for cscK metrics in arbitrary classes, major interest has been attracted by classes that are first Chern classes of ample line bundles. This is because of a conjecture of Yau \cite{Yau93}, Tian \cite{Tia97,Tia02} and Donaldson \cite{Don02} which relates the existence of a cscK metric in the first Chern class $c_1(A)$ of an ample line bundle $A$ on a manifold $M$, to the GIT stability of the polarized manifold $(M,A)$. More precisely the Yau-Tian-Donaldson conjecture states that a manifold $M$ admits a constant scalar curvature K\"ahler metric in the class $c_1(A)$ if and only if $(M,A)$ is K-polystable. The K-stability has been introduced by Tian \cite{Tia97} and then reformulated in a purely algebraic-geometric way by Donaldson \cite{Don02}. While the ``only if'' part of the conjecture has been proved thanks the works of Tian \cite{Tia97}, Donaldson \cite{Don02}, Arezzo and Pacard \cite{ArePac06}, Stoppa \cite{Sto09} and Mabuchi \cite{Mab08,Mab09}, the ``if'' part is widely open in general and has been proved only in the case of toric surfaces by Donaldson \cite{Don09}, and in the case of projective bundles over a curve thanks to the works of Narasimhan and Seshadri \cite{NarSes65}, Ross and Thomas \cite{RosTho06} and Mabuchi \cite{Mab08, Mab09}.

The K-stability is not the only GIT stability notion related to the existence of cscK metrics. Assuming that $M$ has no nontrivial holomorphic vector fields, Donaldson has proved that the existence of a cscK metric representing $c_1(A)$ implies the asymptotic Chow stability of $(M,A)$ \cite{Don01}. Mabuchi has extended this result removing the hypothesis on the automorphisms and introducing a compatibility hypothesis between the polarization $A$ and $\Aut(M)$ \cite{Mab05}. Finally Futaki, Ono and Sano showed that the Mabuchi's hypothesis is equivalent to the vanishing of certain holomorphic functionals, depending only on the polarization class $c_1(A)$, defined on the space of holomorphic vector fields with zeros of $M$ \cite{FutOnoSan08}. These invariants have been previously introduced by Futaki as an obstruction to asymptotic Chow semistability \cite{Fut04}. Since they generalize the {\it classical} Futaki invariant, we will call them higher Futaki invariants.

As shown by Donaldson \cite{Don02} and Paul and Tian \cite{PauTia06}, the original Futaki invariant has a purely algebraic-geometric nature and it is the Mumford weight of the so-called CM line $\lambda_\CM$ on the Hilbert scheme $\mathcal H$ of $\mathbb P^N$. We will show (section \ref{sec::HiCM}) that the same is true for the higher Futaki invariants.

\begin{thm}
Let $\mathcal H$ be the Hilbert scheme of subschemes of $\mathbb P^N$ with Hilbert polynomial $\chi$. For every $\ell=1,\dots,\deg \chi$ there are $\mathbb Q$-line bundles $\lambda_{\CM, \ell}$ on $\mathcal H$ such that the higher Futaki invariant $F_\ell$ is the Mumford weight of $\lambda_{\CM,\ell}$.
\end{thm}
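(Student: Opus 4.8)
The plan is to realize each higher Futaki invariant $F_\ell$ as the coefficient of a fixed power of $m$ in the $\mathbb C^*$-weight on a determinant of cohomology over $\mathcal H$, and then to read off $\lambda_{\CM,\ell}$ from the corresponding coefficient in the polynomial (Knudsen--Mumford) expansion of that determinant. This directly extends the construction of Donaldson and of Paul--Tian for the classical CM line, which handles the single next-to-leading term, to all orders. First I would fix the data: embed $(M,A)$ in $\mathbb P^N$ by a sufficiently divisible power $A^{\otimes r}$, so that $[M]\in\mathcal H$, and let $\pi\colon\mathcal X\to\mathcal H$ be the universal subscheme carrying its tautological relatively ample sheaf $\mathcal L=\mathcal O_{\mathbb P^N}(1)|_{\mathcal X}$. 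The basic object is the determinant of cohomology $\det R\pi_*\mathcal L^{m}$, whose formation commutes with base change and which, for $m\gg0$, is a genuine line bundle since the higher direct images vanish.

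Second, I would invoke Grothendieck--Riemann--Roch to expand its first Chern class polynomially in $m$:
\[
\begin{aligned}
c_1\bigl(\det R\pi_*\mathcal L^{m}\bigr)
&=\pi_*\Bigl(\ch(\mathcal L^{m})\,\td(T_{\mathcal X/\mathcal H})\Bigr)_{(2)}\\
&=\sum_{\ell=0}^{n+1} m^{\,n+1-\ell}\,
\pi_*\!\left[\frac{c_1(\mathcal L)^{\,n+1-\ell}}{(n+1-\ell)!}\,\td_\ell(T_{\mathcal X/\mathcal H})\right],
\end{aligned}
\]
where $n=\deg\chi$, the subscript $(2)$ denotes the degree-two component on $\mathcal H$, and $\td_\ell$ is the degree-$2\ell$ part of the Todd class. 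Each coefficient of $m^{\,n+1-\ell}$ is a class in $\mathrm{Pic}(\mathcal H)\otimes\mathbb Q$; after applying the same renormalization that Futaki, Ono and Sano use to pass from the raw weight to $F_\ell$ (subtracting the multiple of the leading, volume term prescribed by the Hilbert polynomial), these classes define the desired $\mathbb Q$-line bundles $\lambda_{\CM,\ell}$ for $\ell=1,\dots,n$.

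Third, I would compute the Mumford weight. A holomorphic vector field with zeros on $M$ that lifts to $A$ induces, after the embedding, a one-parameter subgroup of $\mathrm{GL}(N+1)$ fixing $[M]\in\mathcal H$; its weight on the fiber $(\det R\pi_*\mathcal L^{m})_{[M]}=\det H^{0}(M,A^{\otimes rm})$ is exactly the total weight $w(rm)=\tr\bigl(\,\cdot\mid H^0(M,A^{\otimes rm})\bigr)$. By the equivariant form of Grothendieck--Riemann--Roch this polynomial in $m$ has coefficients given by equivariant integrals $\int_M \frac{\bigl(r\,c_1^X(A)\bigr)^{\,n+1-\ell}}{(n+1-\ell)!}\,\td_\ell^X(M)$, using the equivariant Chern and Todd classes associated to the generator $X$. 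These are precisely the expressions defining the higher Futaki invariants. Matching the coefficient of $m^{\,n+1-\ell}$ in $w(rm)$ with the Mumford weight of the corresponding coefficient bundle, and carrying out the normalization of the previous step, shows that the weight of $\lambda_{\CM,\ell}$ equals $F_\ell$.

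The main obstacle I expect is not the Riemann--Roch bookkeeping but the precise reconciliation of normalizations: $F_\ell$ is defined as a functional on the Lie algebra of holomorphic vector fields through a renormalized weight in which the dimension $\chi(m)$ enters the denominator, whereas the coefficients above are the unnormalized Mumford weights. I would therefore have to express the Futaki--Ono--Sano renormalization entirely at the level of line bundles on $\mathcal H$, combining $\det R\pi_*\mathcal L^{m}$ with suitable rational powers of the bundles governing the Hilbert polynomial, and then check that the result is independent of the auxiliary integer $r$ and extends over all of $\mathcal H$ as a genuine $\mathbb Q$-line bundle rather than merely as a collection of weights at fixed points. Verifying linearity in the vector field, and that the equivariant integrals reproduce the Futaki--Ono--Sano definition of $F_\ell$ exactly rather than some obstruction of the same shape, is the delicate point.
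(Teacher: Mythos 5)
Your overall strategy --- expand the determinant of cohomology of $\mathcal L^m$ polynomially in $m$, extract the coefficients as $\mathbb Q$-line bundles on $\mathcal H$, normalize by the Hilbert polynomial, and match weights at fixed points --- is the same as the paper's. But the tool you propose for the crucial step fails. You want to produce the coefficient bundles by applying Grothendieck--Riemann--Roch to $\pi\colon\mathcal X\to\mathcal H$ with the class $\td(T_{\mathcal X/\mathcal H})$. The Hilbert scheme is in general singular, and the universal family has fibers that are arbitrary (possibly non-reduced, non-lci) subschemes of $\mathbb P^N$; there is no relative tangent bundle and no Todd class of it, so your displayed formula is not defined. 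This is not a bookkeeping issue: the whole point of working on $\mathcal H$ is that the Mumford weight must be evaluated at $\mathbb C^*$-fixed points corresponding to degenerate central fibers of test configurations, which is exactly where smoothness fails. The paper circumvents this by invoking the Knudsen--Mumford theorem, which for the flat projective morphism $f$ gives a polynomial expansion $\det f_*(\mathcal L^k)=\mu_0^{k^{n+1}}\otimes\mu_1^{k^n}\otimes\cdots\otimes\mu_{n+1}$ with genuine $\mathbb Q$-line bundles $\mu_i$ on all of $\mathcal H$, functorially and hence $SL(N+1,\mathbb C)$-equivariantly; this replaces Riemann--Roch entirely. The normalization you flag as the delicate point is then the easy part: one sets $\lambda_{\CM,\ell}=\mu_\ell^{1/a_0}\otimes\mu_0^{-a_\ell/a_0^2}$, mirroring $F_\ell=(a_0b_\ell-b_0a_\ell)/a_0^2$.

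The second place where you lean on smoothness unnecessarily is the weight computation. You evaluate the weight at $[M]$ via equivariant Riemann--Roch; but the relevant fixed points $x\in\mathcal H$ are central fibers $V=\mathcal U_x$, again possibly singular or non-reduced, where an equivariant Hirzebruch--Riemann--Roch with $\td^X(V)$ is unavailable. No Riemann--Roch is needed here at all: since $\det f_*(\mathcal L^k)|_x=\det H^0(V,\mathcal L_x^k)$ for $k\gg0$, the weight of $\mu_\ell$ at $x$ is forced to equal the coefficient $b_\ell(V,\mathcal L_x)$ of $k^{n+1-\ell}$ in $w(V,\mathcal L_x^k)$ simply by matching polynomial coefficients in the Knudsen--Mumford expansion, and the weight of $\lambda_{\CM,\ell}$ is then $F_\ell(V,\mathcal L_x)$ by definition. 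Equivariant Hirzebruch--Riemann--Roch enters the paper only in the separate Proposition \ref{prop::higher_Futaki}, to show that on a \emph{smooth} fiber these $F_\ell$ coincide with the Futaki--Ono--Sano invariants; that is where your concern about reconciling normalizations is actually resolved, and the smoothness hypothesis is legitimately available there.
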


Furthermore we calculate the Chow weight and the higher Futaki invariants for two classes of manifolds: projective bundles over curves of genus at least two and blowups at finite sets of points.
In the first case (section \ref{sec::prob}) we will show that the higher Futaki invariants are all proportional to the Futaki invariant. This, together with results of Narasimhan and Seshadri \cite{NarSes65} and Mabuchi \cite{Mab05}, is the main ingredient to prove the following

\begin{thm}
A projective bundle $\mathbb P(E)$ over a curve of genus $g \geq 2$ is asymptotically Chow polystable w.r.t. some (and hence any) polarization if and only if $E$ is slope polystable.
\end{thm}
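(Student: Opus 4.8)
The plan is to prove the two implications separately and to deduce the polarization-independence (``some (and hence any)'') from the two together. For the easier implication, I would suppose that $E$ is slope polystable. By the theorem of Narasimhan and Seshadri \cite{NarSes65}, $E$ is then a direct sum of stable bundles of the same slope, each underlying an irreducible projectively flat unitary structure; combined with the constant negative curvature metric on the genus $g\geq 2$ base, the flat structure produces a constant scalar curvature K\"ahler metric on $\mathbb P(E)$ in every polarization class (the fibrewise Fubini--Study and base contributions to the scalar curvature are separately constant, and the mixed terms vanish by flatness). The existence of such a metric forces the classical Futaki invariant $F_1$ to vanish, and by the proportionality established in section \ref{sec::prob} every higher Futaki invariant $F_\ell$ then vanishes as well. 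By Futaki, Ono and Sano \cite{FutOnoSan08} this is exactly Mabuchi's compatibility hypothesis, so Mabuchi's theorem \cite{Mab05} applies and yields that $(\mathbb P(E),A)$ is asymptotically Chow polystable. Since neither the metric nor the vanishing of the invariants depends on the chosen polarization, this direction holds for every $A$.

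For the converse I would argue by contraposition, fixing a polarization $A$ with respect to which $\mathbb P(E)$ is asymptotically Chow polystable and showing that $E$ must be slope polystable. The one-parameter subgroups I use come from one-parameter subgroups of $GL(E)$ adapted to a filtration of $E$; these act on the projective embedding of $\mathbb P(E)$ determined by $A$, and the flat limit of the Chow point is the Chow point of $\mathbb P(\mathrm{gr}\,E)$, where $\mathrm{gr}\,E$ is the associated graded bundle. The key input is the explicit formula for the Chow weight of such a configuration computed in section \ref{sec::prob}, which I expect, up to a positive polarization-dependent constant, to be an increasing function of $\mu(E)-\mu(F)$ for a destabilizing subbundle $F$. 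If $E$ is not slope semistable, a maximal destabilizing subbundle has $\mu(F)>\mu(E)$ and produces a configuration of strictly negative Chow weight, contradicting semistability. If $E$ is slope semistable but not polystable, a saturated subbundle $F$ of equal slope gives a configuration of vanishing Chow weight; here I must verify that the limit $\mathbb P(F\oplus E/F)$ is \emph{not} isomorphic to $\mathbb P(E)$, i.e. that the degeneration is not a product, which reflects the non-splitting of the extension $0\to F\to E\to E/F\to 0$ and hence contradicts strict polystability.

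Putting the two implications together gives the full equivalence together with the clause ``some (and hence any)'': if $\mathbb P(E)$ is asymptotically Chow polystable for one polarization, the converse direction forces $E$ slope polystable, and the direct direction then returns asymptotic Chow polystability for every polarization. The slope condition on $E$ thus acts as a polarization-free bridge between the two.

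The step I expect to be the main obstacle is the semistable-but-not-polystable case of the converse. The strictly unstable case is immediate once the Chow weight formula is in hand, but the borderline weight-zero case requires translating the purely algebraic non-splitting of the extension into the geometric statement that the degeneration to $\mathbb P(\mathrm{gr}\,E)$ is a nontrivial, non-product configuration, so that it genuinely obstructs polystability rather than merely semistability. Keeping track of the automorphisms of $\mathbb P(E)$ for a genus $g\geq 2$ base, which are governed by $\mathrm{End}(E)$, is what should make this identification clean.
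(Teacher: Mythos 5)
Your proposal is correct and follows essentially the same route as the paper: for one direction, Narasimhan--Seshadri plus the vanishing of all the invariants $F_\ell$ (Corollary \ref{cor::Futaki=Chow}) feeding into Mabuchi's theorem, and for the other, the degeneration of $\mathbb P(E)$ to $\mathbb P(F\oplus E/F)$ along the extension class of a destabilizing or non-split subbundle $F$, with the Chow weight of the resulting (product or non-product) configuration read off from the explicit formula of Proposition \ref{prop::Chow_PE}, which is a positive multiple of $\rk(F)(\mu(F)-\mu(E))$. The only caveat is a sign convention: the paper normalizes the Chow weight so that semistability means weight $\le 0$, so the destabilizing configuration has \emph{positive} weight rather than negative, but this does not affect the structure of your argument.
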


This proves a conjecture of Morrison with the extra assumption that the involved polarization is sufficiently divisible \cite[Conjecture 5.10]{Mor80}. Actually Morrison's conjecture deals also with Hilbert-stability, but we know that Hilbert-stability and Chow stability asymptotically coincide thanks to a result of Mabuchi \cite{Mab08bis}. We notice that one direction of a generalization of the Morrison conjecture to higher dimensional base manifolds has been recently proved by Seyyedali \cite{Sey10} using technics different than ours. On the other hand a recent result due to Apostolov, Calderbank, Gauduchon and T{\o}nnesen-Friedman \cite{ApoCalGauTon09} states that $\mathbb P(E)$ admits a cscK metric in some (and hence any) K\"ahler class if and only if $E$ is slope polystable. Thus we get the following 

\begin{thm}
A projective bundle $\mathbb P(E)$ over a curve of genus $g \geq 2$ is cscK if and only if it is asymptotically Chow polystable w.r.t. some (and hence any) polarization.
\end{thm}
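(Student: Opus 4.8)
The plan is to deduce this equivalence formally by routing both notions through a single algebraic condition on $E$, namely slope polystability. First I would invoke the characterization of asymptotic Chow polystability established above, which states that $\mathbb P(E)$ is asymptotically Chow polystable with respect to some (and hence any) polarization if and only if $E$ is slope polystable. Next I would invoke the result of Apostolov, Calderbank, Gauduchon and T{\o}nnesen-Friedman \cite{ApoCalGauTon09}, which states that $\mathbb P(E)$ admits a cscK metric in some (and hence any) K\"ahler class if and only if $E$ is slope polystable. Concatenating these two biconditionals then gives at once that $\mathbb P(E)$ is cscK precisely when it is asymptotically Chow polystable.

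The one point I would take care to address is the matching of the quantifiers attached to the two properties: asymptotic Chow polystability is a statement about integral (or rational) polarization classes, whereas the existence result of \cite{ApoCalGauTon09} is phrased for arbitrary K\"ahler classes. I would handle this by noting that each property already carries its own ``some if and only if any'' dichotomy --- the Chow side by the theorem recalled above, the cscK side by \cite{ApoCalGauTon09} --- so that the intermediate condition that $E$ be slope polystable is class-independent on both sides. Since polarization classes are in particular K\"ahler classes, the existence of a cscK metric in some K\"ahler class is then equivalent to its existence in every polarization class, and the two ``some''-quantified statements line up through slope polystability with no loss.

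I do not expect a genuine obstacle here, since all the substantive input has already been assembled: the Chow-theoretic content lives in the proportionality of the higher Futaki invariants to the classical Futaki invariant, combined with Narasimhan--Seshadri \cite{NarSes65} and Mabuchi \cite{Mab05}, while the differential-geometric content is supplied entirely by \cite{ApoCalGauTon09}. The final statement is therefore a formal corollary obtained by transitivity of ``if and only if'' once these two characterizations are in place.
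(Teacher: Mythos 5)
Your proposal is correct and is exactly the paper's argument: the theorem is stated as an immediate consequence of Theorem \ref{thm::Morrison_conj} (asymptotic Chow polystability $\Leftrightarrow$ slope polystability of $E$) and Theorem \ref{thm::Gaud_cscK} (existence of a cscK metric $\Leftrightarrow$ slope polystability of $E$), concatenated through the class-independent intermediate condition. Your extra remark on matching the quantifiers is a reasonable elaboration of what the paper leaves implicit.
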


In the case of blowups (section \ref{sec::blup}), we will give a formula for the Chow weight and higher Futaki invariants in terms of data on the base manifold. That formula, together with a result of Arezzo and Pacard \cite{ArePac09}, provides many new examples of asymptotically Chow unstable cscK classes. In particular we give the details in the case of $\mathbb P^2$ blown-up at four points in special position.

\begin{prop}
Let $M$ be the projective plane $\mathbb P^2$ blown up at four points. If all but one are aligned, then $M$ admits an asymptotically Chow unstable cscK polarization. 
\end{prop}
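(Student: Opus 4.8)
The plan is to produce a single ample class on $M$ that carries a cscK metric yet has a nonzero higher Futaki invariant, so that the identification of $F_\ell$ with the Mumford weight of $\lambda_{\CM,\ell}$ forces asymptotic Chow instability. The first step is to locate the holomorphic vector fields. Writing the line through the three aligned points as $L=\{z_2=0\}$, placing $p_1,p_2,p_3\in L$ and $p_4=[0:0:1]\notin L$, one checks that the subgroup of $\mathrm{PGL}(3)$ preserving the configuration contains the one-parameter group $[z_0:z_1:z_2]\mapsto[z_0:z_1:e^tz_2]$; call its generator $\xi$. This group fixes $L$ pointwise and fixes $p_4$, so it lifts to $\Aut^0(M)$, which is therefore nontrivial. (For four points in general position $M$ is a degree-five del Pezzo surface with finite automorphism group, all higher Futaki invariants vanish, and no instability can occur; this is why the alignment hypothesis is indispensable.) Note that the three aligned points sit on the fixed line of $\xi$, while $p_4$ is its isolated fixed point.

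Next I would establish existence of cscK metrics. Since $(\mathbb P^2,\omega_{FS})$ is cscK, the blowup theorem of Arezzo and Pacard \cite{ArePac09} produces cscK metrics on $M$ in classes $\pi^*c_1(\mathcal O(1))-\sum_i \epsilon^2 a_i[E_i]$ for all small $\epsilon>0$, once the weights $a_i>0$ are chosen to satisfy the balancing hypothesis of \cite{ArePac09} relative to the $\mathrm{PU}(3)$-action. The step here is to verify that the aligned configuration admits such weights; by the permutation symmetry among $p_1,p_2,p_3$ one may take $a_1=a_2=a_3$ and solve for the ratio to $a_4$, and these can be chosen rational, so that after rescaling $\epsilon^2 a_i\in\mathbb Q$ the class is a genuine polarization $A$.

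It remains to show that this polarization is not asymptotically Chow semistable. Since $\dim_{\mathbb C}M=2$, beyond the classical invariant there is only one higher Futaki invariant $F_2$, and because $A$ is cscK the classical invariant $F_1(\xi)$ vanishes; the target is $F_2(\xi)\neq 0$. For this I would insert the blowup formula of Section \ref{sec::blup}, which writes $F_2(\xi)$ on $M$ as the contribution of $(\mathbb P^2,\xi)$ plus a sum of local terms at the four points, each term a function of $a_i$ and of the value at $p_i$ of the Hamiltonian $h_\xi$ of $\xi$. The key geometric input is that $h_\xi$ is constant along the fixed line $L$ and attains there one extreme value, while taking the opposite extreme value at $p_4$: thus three of the four local contributions coincide and the weighted sum cannot cancel the base term to leading order in $\epsilon$, giving $F_2(\xi)\neq 0$. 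By the identification of $F_2$ with the Mumford weight of $\lambda_{\CM,2}$ and by Futaki's obstruction \cite{Fut04}, $(M,A)$ is asymptotically Chow unstable.

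The main obstacle is this last nonvanishing. One must make the blowup formula explicit enough to isolate the leading $\epsilon$-term and confirm that the alignment (three points at one value of $h_\xi$, one at the other) indeed breaks the cancellation that holds for balanced configurations; simultaneously one must check that the balancing condition used to obtain cscK in the previous step is genuinely weaker than the vanishing of $F_2$, so that existence and instability are compatible rather than in conflict. It is precisely this gap --- cscK existence requiring only the vanishing of $F_1$ while Chow semistability additionally requires $F_2=0$ --- that the example is designed to exploit.
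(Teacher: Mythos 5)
Your overall strategy coincides with the paper's (same one-parameter subgroup fixing the line through the three aligned points and the fourth point, Arezzo--Pacard for existence of cscK classes, the blow-up formula of Theorem \ref{thm::Fl_blowup} for the higher invariants, and Futaki's obstruction for instability), but the two steps you flag as ``to be checked'' are precisely where the argument lives, and your sketched reasons for them do not work. First, the rationality of the cscK class is not secured by ``choosing the $a_i$ rational'': the Arezzo--Pacard theorem produces cscK metrics in classes whose coefficients are small perturbations of $\epsilon^2 a_i$ (and the points themselves may move), so the class you actually obtain need not be rational. The paper circumvents this by a different mechanism: the cscK locus $\mathcal C$ is \emph{open} in $\mathcal F=\{F_1=0\}$ (LeBrun--Simanca openness of the extremal cone plus the fact that an extremal class with vanishing Futaki invariant is cscK), and rational points are dense in $\mathcal F$ because $(1,1,0,0,0)$ is a triple point of the quartic threefold $\psi_1=0$; only then can one intersect $\mathcal C$ with the rational points and with the complement of $\{F_2=0\}$.

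Second, and more seriously, your key nonvanishing claim --- that the alignment forces $F_2(\xi)\neq 0$ ``to leading order in $\epsilon$'' because three local contributions coincide --- is false. For the action $\diag(1,1,t)$ on $\mathbb P^2$ one has $\lambda(p)=-3\,\phi(p)$ at every fixed point ($\phi=-1/3$, $\lambda=1$ on the fixed line; $\phi=2/3$, $\lambda=-2$ at the isolated fixed point), and by formula \eqref{eq::Fell_blup} the leading adiabatic terms of both $F_1$ and $F_2$ are proportional to $\sum_j\alpha_j\,\phi(p_j)$; this is visible in the paper's explicit polynomials, whose top-degree parts are $(2\alpha_1-\alpha_2-\alpha_3-\alpha_4)m^3$ and $(2\alpha_1-\alpha_2-\alpha_3-\alpha_4)m^2$ respectively. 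Hence on the cscK locus the leading term of $F_2$ \emph{does} cancel, and the nonvanishing of $F_2$ is a subleading phenomenon. Establishing it requires the exact expressions $\psi_1,\psi_2$ and the observation that $\psi_2$ is irreducible and does not divide $\psi_1$, so that $\{\psi_2\neq 0\}$ is a nonempty Zariski-open subset of $\{\psi_1=0\}$. Without this, your argument would equally ``prove'' instability for three non-aligned points, where the paper shows the loci $F_1=0$ and $F_2=0$ actually coincide and no instability occurs.
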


Thus asymptotically Chow unstable cscK classes exist already in dimension two. To the best of our knowledge, the unique example of this kind previously known was the asymptotically Chow unstable K\"ahler-Einstein 7-dimensional toric manifold studied by Ono, Sano and Yotsutani \cite{OnoSanYot09}. Finally it is worth to notice that, thanks to a theorem of Zhang \cite{Zha96}, such manifolds admit no balanced embeddings (see \cite{Don01} for a definition) for each sufficiently high power of the polarization. This is clear from the construction, but it is not a trivial consequence of definition of asymptotic Chow unstability. The point is that one might still have an unbounded sequence of powers of the polarization which are balanceable. However, to the best of the authors' knowledge, examples of such a phenomenon are not know.

\section{acknowledgments}

The first named author would like thank G. La Nave and G. Tian for stimulating discussions. Both the authors would like to thank C. Arezzo for the encouragement  and the interest in this work. The research of the first named author is supported by a Marie Curie International Outgoing Fellowship within the $7^{\rm th}$ European Community Framework Programme (program CAMEGEST, proposal no. 255579).


\section[Higher CM lines]{Higher CM lines}\label{sec::HiCM}
Let $(V,A)$ be an $n$-dimensional polarized variety or scheme. Given a one parameter subgroup $\rho: \mathbb C^* \to {\rm Aut}(V)$ with a linearization on $A$, and denoted by $w(V,A)$ the weight of the $\mathbb C^*$-action induced on $\bigwedge^{\rm top} H^0(V,A)$ we have the following definition, essentially due to Mumford \cite{Mum77}:

\begin{defn}\label{defn::CM-weight}
When $A$ is very ample and without higher cohomology, the (normalized) \emph {Chow weight} of the given action is the rational number $$ \Chow(V,A) = \frac{w(V,A)}{\chi(V,A)} - \frac{b_0(V,A)}{a_0(V,A)}, $$ where $a_0$ and $b_0$ are defined by $w(V,A^k) = b_0(V,A)k^{n+1} + O(k^n)$ and $\chi(V,A^k) = a_0(V,A)k^n + O(k^{n-1})$ as $k\gg 0$.
\end{defn}

Clearly $w(V,A)$ and $b_0(V,A)$ depend on the fixed action $\rho$ and its linearization on $A$. On the other hand $\Chow(V,A)$ depends only on the action and it is insensible of the chosen linearization on $A$ (see Proposition \ref{prop::higher_Futaki} below). In the following we leave these dependences always understood since they will be clear from the context.    

We are interested in the asymptotic behavior of $\Chow(V,A^k)$ when $k$ grows. By the general theory, at least for $k\gg 0$, we have polynomial expansions
\begin{eqnarray}
\label{eq::asym_chi(V,A^k)} \chi(V,A^k) &=& \sum_{\ell=0}^n a_\ell(V,A) k^{n-\ell}, \\
\label{eq::asym_w(V,A^k)} w(V,A^k) &=& \sum_{\ell=0}^{n+1} b_\ell(V,A) k^{n+1-\ell},
\end{eqnarray} 
whence we get easily $$ \Chow(V,A^k) = \frac{b_{n+1}(V,A)}{\chi(V,A^k)} + \frac{a_0(V,A)}{\chi(V,A^k)} \sum_{\ell=1}^n \frac{a_0(V,A)b_\ell(V,A)-b_0(V,A)a_\ell(V,A)}{a_0(V,A)^2}k^{n+1-\ell}. $$

When $V$ is smooth, using equivariant cohomology theory we can express $b_{n+1}(V,A)$ by an integral of an equivariant characteristic class (see the proof of Proposition \ref{prop::higher_Futaki}), and localization formula gives $b_{n+1}(V,A) =0$ \cite[Theorem 5.3.11]{Fut88}. In general the non-vanishing of $b_{n+1}$ turns to be related to non-reduced components of $V$. For example if $V$ is a projective space with a sufficiently non-reduced scheme structure, then a direct calculation of $w(V,A^k)$ shows that $b_{n+1}\neq 0$.

\begin{prop}\label{prop::higher_Futaki}
In the situation above, for $\ell= 1, \dots, n$ let $$ F_\ell(V,A) = \frac{a_0(V,A)b_\ell(V,A)-b_0(V,A)a_\ell(V,A)}{a_0(V,A)^2}. $$ $F_\ell(V,A)$ does not depend on the linearization on $A$, moreover if $V$ is smooth, then $F_\ell(V,A)$ is the $\ell$-th holomorphic invariant discussed in \cite{FutOnoSan08}. 
\end{prop}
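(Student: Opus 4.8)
The plan is to prove the two assertions in turn, the first by a direct weight computation and the second through equivariant Riemann--Roch. For the independence of the linearization, I would note that two linearizations of $\rho$ on $A$ differ by a character of $\mathbb C^*$ of some weight $m\in\mathbb Z$, so the induced linearizations on $A^k$ differ by the weight $mk$. Every weight occurring in $H^0(V,A^k)$ is thereby shifted by $mk$; since $A^k$ has no higher cohomology for $k\gg 0$, so that $\chi(V,A^k)=\dim H^0(V,A^k)$, the total weight transforms as $w(V,A^k)\mapsto w(V,A^k)+mk\,\chi(V,A^k)$. Substituting the asymptotic expansions of $w$ and $\chi$ recalled above and matching the coefficient of $k^{n+1-\ell}$ gives $b_\ell\mapsto b_\ell+m\,a_\ell$ for $\ell=0,\dots,n$ (setting $a_{n+1}:=0$); in particular $b_0\mapsto b_0+m\,a_0$. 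A one-line substitution then shows that $a_0b_\ell-b_0a_\ell$, and hence $F_\ell$, is unchanged, which simultaneously re-proves that $\Chow(V,A)$ is insensitive to the chosen linearization.

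For the identification with the invariants of \cite{FutOnoSan08}, assume $V$ smooth and $k\gg 0$. Let $\xi$ be the infinitesimal generator of $\rho$ and $\theta$ its Hamiltonian relative to an $S^1$-invariant curvature form $\omega\in c_1(A)$, so that the equivariant first Chern class of $A$ is $\omega+s\theta$ in the equivariant parameter $s$. Since the higher cohomology vanishes, $w(V,A^k)=\tr\!\bigl(\xi\mid H^0(V,A^k)\bigr)$ is the derivative at $s=0$ of the character-valued index, and the equivariant Riemann--Roch theorem yields $w(V,A^k)=\int_V k\,\theta\,e^{k\omega}\,\td(V)+\int_V e^{k\omega}\,\td'(V)$, where $\td'(V)$ is the first-order term in $s$ of the equivariant Todd class of $TV$. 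Expanding this together with $\chi(V,A^k)=\int_V e^{k\omega}\,\td(V)$ in powers of $k$ gives $a_\ell$ and $b_\ell$ as explicit integrals over $V$ of $\theta$ times characteristic forms (plus the $\td'$ contribution), which I would then feed into the definition of $F_\ell$.

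The main obstacle is to recognize the result as the $\ell$-th Futaki--Ono--Sano invariant, which requires reconciling two normalizations. First, the term $b_0/a_0$ subtracted in $\Chow$ corresponds exactly to the freedom of adding a constant to $\theta$ (the differential-geometric counterpart of the linearization ambiguity already settled), so the combination $a_0b_\ell-b_0a_\ell$ is precisely the Futaki-type integral, well defined independently of the choice of Hamiltonian. Second, the $\td'(V)$ contribution must be matched with the moment/curvature term in their formula, for which the key input is the standard identity expressing the derivative of the equivariant Todd class through the Hamiltonian of the canonical bundle; after this the two families of integrals agree coefficient by coefficient. The vanishing $b_{n+1}=0$ invoked in the smooth case is the instance $\ell=n+1$ of the same computation --- it is the (trivial) weight of $\rho$ on $H^0(V,\mathcal O_V)=\mathbb C$ --- in agreement with the localization argument of \cite[Theorem 5.3.11]{Fut88}.
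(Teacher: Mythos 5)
Your argument follows the paper's proof essentially verbatim: the same linearization-shift computation $b_\ell\mapsto b_\ell+m\,a_\ell$ (hence invariance of $a_0b_\ell-b_0a_\ell$) for the first claim, and the same equivariant Hirzebruch--Riemann--Roch expansion of $w(V,A^k)$ and $\chi(V,A^k)$ for the identification with the invariants of \cite{FutOnoSan08}, where the paper merely makes your $\td'(V)$ concrete via the endomorphism $\Lambda=\nabla_X-L_X$ and observes that the resulting $F_\ell$ matches expansion (4) of \cite{FutOnoSan08} after normalizing $\hat\phi=0$. The only imprecision is your parenthetical on $b_{n+1}=0$---evaluating the Riemann--Roch polynomial at $k=0$ gives the weight on the full alternating sum $\sum_i(-1)^iH^i(V,\mathcal O_V)$, not just on $H^0(V,\mathcal O_V)$---but this quantity does not enter $F_\ell$ for $\ell\le n$, and the paper simply cites \cite[Theorem 5.3.11]{Fut88} as you also do.
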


\begin{proof}
Changing the linearization of the given $\mathbb C^*$-action on $A$ has the effect to shift the weight $w(V,A^k)$ to $w(V,A^k) + c k \chi(V,A^k)$ for some integral constant $c$. Thus $b_\ell(V,A)$ shifts to $b_\ell(V,A) + c\, a_\ell(V,A)$, and it is easy to see that $F_\ell(V,A)$ is unchanged.

To prove the second assertion we assume $V$ smooth and we use the (equivariant) Hirzebruch-Riemann-Roch theorem in the same way as Donaldson proves that $F_1$ is nothing but the Futaki invariant up to a constant \cite{Don02}. Let $\omega$ be a $S^1$-invariant K\"ahler form representing $c_1(A)$ and fix a moment map $\phi$, so that $\omega + \phi$ represents the equivariant first Chern class of $A$ in the Cartan model of the equivariant cohomology \cite{AtiBot84,BerGetVer04}. Moreover let $\Theta$ be the curvature of the Chern connection $\nabla$ of the K\"ahler metric associated to $\omega$, and let $\Lambda = \nabla_X - L_X$ where $X$ is the holomorphic generator of the given $\mathbb C^*$-action and $L_X$ denotes the Lie derivative. The operator $\Lambda$ defines an endomorphism of the holomorphic tangent bundle and $\Theta + \Lambda$ represents the equivariant curvature of the fixed K\"ahler metric.

By the equivariant Hirzebruch-Riemann-Roch theorem \cite[Theorem 8.2]{BerGetVer04} we know that the character of the induced representation on $\bigoplus_{i\geq 0} H^i(V,A^k)^{(-1)^i}$ (here by the power $-1$ we mean the dual) is given by the integral of a suitable equivariant characteristic forms. On the other hand by ampleness of $A$ we have $H^i(V,A^k)=0$ for all $i>0$ as $k\gg 0$. Thus we get
\begin{eqnarray*} 
w(V,A^k) &=& \left. \frac{d}{dt} \right|_{t=0} \int_V e^{k(\omega + t\phi)} \td(\Theta + t\Lambda) \\
&=& \int_V k\phi e^{k\omega} \td(\Theta) + \int_V e^{k\omega} \left. \frac{d}{dt} \right|_{t=0} \td(\Theta + t\Lambda) \\
&=& \sum_{\ell=0}^n k^{n+1-\ell} \int_V \phi \frac{\omega^{n-\ell}}{(n-\ell)!} \td_\ell(\Theta) + \sum_{\ell=0}^n k^{n-\ell} \int_V \frac{\omega^{n-\ell}}{(n-\ell)!} \td_{\ell+1}(\Theta+\Lambda) \\
&=& \sum_{\ell=0}^n k^{n+1-\ell} \left( \int_V \phi \frac{\omega^{n-\ell}}{(n-\ell)!} \td_\ell(\Theta) + \int_V \frac{\omega^{n+1-\ell}}{(n+1-\ell)!} \td_\ell(\Theta+\Lambda)\right),
\end{eqnarray*}
and $$ \chi(V,A^k) = \int_V e^{k\omega} \td(\Theta) = \sum_{\ell=0}^n k^{n-\ell} \int_V \frac{\omega^{n-\ell}}{(n-\ell)!} td_\ell(\Theta).$$
Let $\hat \phi = b_0(V,A) / a_0(V,A) = \left( \int_V \frac{\omega^n}{n!} \right)^{-1} \int_V \phi \frac{\omega^n}{n!} $ be the average of $\phi$, we have 
$$ F_\ell(V,A) = \frac{\int_V (\phi - \hat \phi) \frac{\omega^{n-\ell}}{(n-\ell)!} \td_\ell(\Theta)}{\int_V \frac{\omega^n}{n!}} + \frac{\int_V \frac{\omega^{n+1-\ell}}{(n+1-\ell)!} \td_\ell(\Theta + \Lambda)}{\int_V \frac{\omega^n}{n!}}, $$ which are essentially the coefficients of polynomial expansion (4) of \cite{FutOnoSan08} if we assume $\hat \phi =0$. 
\end{proof}

We can use the Chow-Mumford weight introduced above and test configurations \cite{Don02,RosTho07} to define the (asymptotic) Chow stability.

\begin{defn}\label{defn::TC}
A \emph{test configuration} $(X, L)\to \mathbb C$ of exponent $k>0$ of a polarized manifold $(M,A)$ consists of a scheme $X$ endowed with a $\mathbb C^*$-action that linearizes on a line bundle $L$ over $X$, and a flat $\mathbb C^*$-equivariant morphism $f: X \to \mathbb C$ (where $\mathbb C^*$ acts on $\mathbb C$ by multiplication) such that $L|_{f^{-1}(0)}$ is ample on $f^{-1}(0)$ and we have $(f^{-1}(1) , L|_{f^{-1}(1)}) \simeq (M,A^k)$.

When $(M,A)$ has a $\mathbb C^*$-action $\rho: \mathbb C^* \to {\rm Aut}(M)$, a test configuration where $X = M \times \mathbb C$ and $\mathbb C^*$ acts on $X$ diagonally through $\rho$ is called \emph{product configuration}. A product configuration endowed with the trivial $\mathbb C^*$-action on $M$ is called \emph{trivial configuration}.
\end{defn} 

Clearly the central fiber $(f^{-1}(0), L|_{f^{-1}(0)})$ of a test configuration is a polarized scheme endowed with a $\mathbb C^*$-action. 

\begin{defn}
The polarized manifold $(M,A)$ is called
\begin{itemize}
\item \emph{asymptotically Chow semistable} if there is a $k_0>0$ such that for each test configuration of exponent $k>k_0$ the Chow weight of the induced action on the central fiber $(f^{-1}(0), L|_{f^{-1}(0)})$ is less than or equal to zero; 
\item \emph{asymptotically Chow polystable} if it is semistable and the Chow weight is zero if and only if we have a product configuration;
\item \emph{asymptotically Chow stable} if it is polystable and every product configuration is trivial.
\item \emph{asymptotically Chow unstable} if it is not semistable.
\end{itemize}
\end{defn}

With the notations introduced above, the main theorem of \cite{Mab05} becomes

\begin{thm}[Mabuchi]\label{thm::Mabuchi}
Let $(M,A)$ be a polarized n-fold admitting a cscK metric representing $c_1(A)$, then $(M,A)$ is asymptotically Chow polystable if and only if for every $\mathbb C^*$-action linearizing on $A$ we have $ F_\ell(M,A)=0$ for all $\ell=1,\dots,n$.
\end{thm}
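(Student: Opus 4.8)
The plan is to read this statement as a translation of Mabuchi's theorem \cite{Mab05} into the present notation, and to treat the two implications separately, since they are of very different depth. The ``only if'' direction (polystability forces the vanishing of the $F_\ell$) is elementary and uses nothing but the asymptotic expansion of the Chow weight recorded above; the ``if'' direction is the analytic substance of Mabuchi's work and will be quoted rather than reproved. The bridge between the two formulations is Proposition \ref{prop::higher_Futaki}, which identifies the invariants $F_\ell(M,A)$ with the functionals of Futaki, Ono and Sano.

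For the ``only if'' direction I would fix a $\mathbb C^*$-action $\rho$ linearizing on $A$ and form the associated product configuration, whose central fiber is $(M,A^k)$ carrying the action $\rho$. Because $M$ is smooth, the localization argument recalled before Proposition \ref{prop::higher_Futaki} gives $b_{n+1}(M,A)=0$, so the displayed expansion of the Chow weight collapses to
$$ \Chow(M,A^k) = \frac{a_0(M,A)}{\chi(M,A^k)} \sum_{\ell=1}^n F_\ell(M,A)\, k^{n+1-\ell}. $$
By the definition of asymptotic Chow polystability a product configuration has Chow weight zero for all $k\gg 0$; since $a_0(M,A)\neq 0$ and $\chi(M,A^k)\neq 0$ for $k\gg 0$, this forces the polynomial $\sum_{\ell=1}^n F_\ell(M,A)\,k^{n+1-\ell}$ to vanish for infinitely many $k$, hence identically. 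Thus $F_\ell(M,A)=0$ for every $\ell$, and letting $\rho$ range over all admissible actions yields the claim. I note that this half uses neither the cscK hypothesis nor any deep input.

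For the ``if'' direction I would invoke the main theorem of \cite{Mab05}: a polarized manifold carrying a cscK metric in $c_1(A)$ is asymptotically Chow polystable provided the compatibility condition between $A$ and $\Aut(M)$ holds. By \cite{FutOnoSan08} that compatibility condition is equivalent to the vanishing, for every $\mathbb C^*$-action, of the Futaki--Ono--Sano functionals, which by Proposition \ref{prop::higher_Futaki} agree (under the normalization $\hat\phi=0$) with the $F_\ell(M,A)$. Hence the hypothesis that $F_\ell(M,A)=0$ for all $\ell$ and all actions is precisely Mabuchi's compatibility condition, and his theorem then delivers polystability.

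The hard part is entirely confined to the reverse implication, whose content is Mabuchi's analytic construction and is taken as a black box here; the genuine bookkeeping on the algebro-geometric side is to verify that the normalization conventions of Proposition \ref{prop::higher_Futaki} match those of \cite{FutOnoSan08} so the two families of invariants are literally identified, and to check that the equivalence between the compatibility condition and the vanishing of the $F_\ell$ is required uniformly over all $\mathbb C^*$-actions linearizing on $A$, not merely over a fixed maximal torus.
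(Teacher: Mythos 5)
Your proposal is correct and matches the paper's treatment: the paper offers no proof at all, simply presenting the statement as the main theorem of \cite{Mab05} rewritten via the Futaki--Ono--Sano identification of Mabuchi's compatibility condition with the vanishing of the $F_\ell$ (Proposition \ref{prop::higher_Futaki}). Your additional elementary product-configuration argument for the ``only if'' direction, using $b_{n+1}(M,A)=0$ on a smooth manifold and the displayed expansion of $\Chow(M,A^k)$, is a sound and worthwhile unpacking of what the paper leaves implicit in the citation.
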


Paul and Tian \cite{PauTia06} have shown that the Futaki invariant is the Mumford weight (in the sense of GIT) of the line bundle $\lambda_{\CM}$ over the Hilbert scheme $\mathcal H$ of the subschemes of $\mathbb P^N$ with Hilbert polynomial $$\chi(k) = a_0 k^n + a_1 k^{n-1} + \dots + a_n. $$ In the same way the invariant $F_\ell$ is the Mumford weight of the line bundle $\lambda_{\CM,\ell}$ which is constructed as follows.

Let $f: \mathcal U \to \mathcal H$ be the universal (flat) family over $\mathcal H$. By general theory there is an embedding $\iota: \mathcal U \to \mathcal H \times \mathbb P^N$ and $f$ factorizes through it: $f=pr_{\mathcal H} \circ \iota$. Moreover let $\mathcal L=\iota^* \circ pr_{\mathbb P^N}^* \mathcal O_{\mathbb P^N}(1)$ be the relative very ample line bundle on $\mathcal U$ obtained by restriction of the hyperplane bundle on each fiber of $f$.

Thanks to the relative very ampleness of $\mathcal L$, the cohomology $H^0(\mathcal U_x,\mathcal L_x^k)$ of the fiber of $f$ over $x\in \mathcal H$ is isomorphic to the fiber over the same point of the direct image $f_*(\mathcal L^k)$, at least for $k$ sufficiently large. Indeed, with this assumption, by flatness hypothesis the direct image $f_*(\mathcal L^k)$ is a locally free sheaf on $\mathcal H$ \cite[Proposition 7.9.13]{EGAIII2}, and by relative ampleness we have $H^q(\mathcal U_x,\mathcal L_x^k)=0$ for all $q>0$ so that the fiber $f_*(\mathcal L^k)_x$ is naturally isomorphic to $H^0(\mathcal U_x,\mathcal L_x^k)$ by \cite[Theorem 12.11]{H77} as claimed. Thus we conclude that 
\begin{eqnarray}\label{eq::rank=P} \rk f_*(\mathcal L^k) &=& \dim H^0(\mathcal U_x, \mathcal L_x^k), \\ \label{eq::det=det} \det f_*(\mathcal L^k)|_x &=& \det H^0(\mathcal U_x, \mathcal L_x^k) , \end{eqnarray} 
for all $x\in \mathcal H$ and $k \gg 0$. Since $\dim H^0(\mathcal U_x,\mathcal L_x^k)$ coincides with the Hilbert polynomial $\chi(k)$, we have an expansion 
\begin{equation}\label{eq::rk_exp} \rk f_*(\mathcal L^k) = a_0 k^n + a_1 k^{n-1} + \dots + a_n \quad \mbox{ for } k\gg 0.\end{equation} 
Now consider the determinant of the locally free sheaf $f_*(\mathcal L^k)$ for $k$ big enough. As an easy corollary of a result due to Knudsen and Mumford \cite[Proposition 4]{KnuMum76}, we have
\begin{equation}\label{eq::KM_exp} \det f_*(\mathcal L^k) = \mu_0^{k^{n+1}} \otimes \mu_1^{k^n} \otimes \dots \otimes \mu_{n+1}, \end{equation}
where $\mu_0, \dots, \mu_{n+1}$ are $\mathbb Q$-line bundles on $\mathcal H$. 

\begin{defn}
The Chow-line on the Hilbert scheme $\mathcal H$ is defined by $$\lambda_{Chow}(\mathcal H,\mathcal L) = \det f_*(\mathcal L) ^\frac{1}{\rk f_*(\mathcal L)} \otimes \mu_0^{-\frac{1}{a_0}}.$$
\end{defn}

Combining \eqref{eq::KM_exp} and \eqref{eq::rk_exp}, always for $k\gg 0 $, we get the asymptotic expansion 
\begin{eqnarray*}
\lambda_{Chow} (\mathcal H, \mathcal L^k) &=& \left( \det f_*(L^k) ^\frac{1}{a_0} \otimes \mu_0^{-\frac{k\chi(k)}{a_0^2}} \right)^\frac{a_0}{\chi(k)} \\
&=& \mu_{n+1}^\frac{1}{\chi(k)} \otimes \bigotimes_{\ell =1}^n \left( \mu_\ell ^\frac{1}{a_0} \otimes \mu_0^{-\frac{a_\ell}{a_0^2}} \right)^\frac{a_0 k^{n+1-\ell}}{\chi(k)}.
\end{eqnarray*}

\begin{defn}
The $\ell$-th CM-line on the Hilbert scheme $\mathcal H$ is defined by $$ \lambda_{\CM,\ell}(\mathcal H,\mathcal L) = \mu_\ell ^\frac{1}{a_0} \otimes \mu_0^{-\frac{a_\ell}{a_0^2}}.$$
\end{defn}

For $\ell=1$ we recover (a rational multiple of) the CM-line introduced by Paul and Tian \cite{PauTia06}.

Since $SL(N+1,\mathbb C)$ acts naturally on $\mathcal H$ and $\mathcal U$ making equivariant all maps introduced above, $\mathcal L$ comes equipped with a natural $SL(N+1,\mathbb C)$-linearization and expansion \eqref{eq::KM_exp} holds in the sense of linerized bundles. Hence the $\lambda_{\CM,\ell}$ are linearized bundles. Fix a one parameter subgroup of $SL(N+1,\mathbb C)$, and let $x\in \mathcal H$ be a fixed point, moreover set $V=\mathcal U_x$ and $A=\mathcal L_x$. Thus we are in the situation of definition \ref{defn::CM-weight} and are defined $b_\ell (V,A)$ as in \eqref{eq::asym_w(V,A^k)}. On the other hand by \eqref{eq::det=det} the Mumford weight of $\det f_*(\mathcal L^k)|_x$ is nothing but $w(V,A^k)$, thus by \eqref{eq::KM_exp} the Mumford weight of $\mu_\ell$ over $x$ must be $b_\ell$. This proves the following

\begin{prop}
The invariants $\Chow$ and $F_\ell$ are the Mumford weights of the lines $\lambda_{Chow}$ and $\lambda_{\CM,\ell}$ respectively.
\end{prop}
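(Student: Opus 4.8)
The plan is to reduce the identification of the two global invariants with Mumford weights to a fiberwise weight computation at $\mathbb{C}^*$-fixed points, exploiting the equivariance built into the universal family. First I would fix a one-parameter subgroup $\rho\colon \mathbb{C}^* \to SL(N+1,\mathbb{C})$ together with a fixed point $x \in \mathcal{H}$, and set $V = \mathcal{U}_x$ and $A = \mathcal{L}_x$. Because $x$ is fixed and all the maps $\iota$, $f$, $pr_{\mathbb{P}^N}$ are $SL(N+1,\mathbb{C})$-equivariant, the fiber $V$ inherits a $\mathbb{C}^*$-action and the natural linearization of $\mathcal{L}$ restricts to a linearization on $A$; this places us exactly in the setting of Definition \ref{defn::CM-weight}, with the weights $b_\ell(V,A)$ defined through the expansion \eqref{eq::asym_w(V,A^k)}.

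Next I would recall the two formal properties of Mumford weights that do all the work: the weight of a $\mathbb{C}^*$-linearized line bundle at a fixed point is simply the integer weight of the induced action on its one-dimensional fiber, and this assignment is additive under tensor products and extends $\mathbb{Q}$-linearly to the $\mathbb{Q}$-line bundles appearing in \eqref{eq::KM_exp}. The crucial input is then the equivariant isomorphism \eqref{eq::det=det}: it identifies the fiber $\det f_*(\mathcal{L}^k)|_x$ with $\bigwedge^{\mathrm{top}} H^0(V,A^k)$ \emph{as $\mathbb{C}^*$-representations}, so by the very definition of $w(V,A^k)$ the Mumford weight of $\det f_*(\mathcal{L}^k)$ at $x$ equals $w(V,A^k)$. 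Feeding the Knudsen--Mumford factorization \eqref{eq::KM_exp} into this identity and matching powers of $k$ against \eqref{eq::asym_w(V,A^k)} coefficient by coefficient, I would conclude that the Mumford weight of $\mu_\ell$ at $x$ is exactly $b_\ell(V,A)$.

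With these fiber weights in hand the two identifications become pure bookkeeping. By additivity the weight of $\lambda_{\CM,\ell} = \mu_\ell^{1/a_0} \otimes \mu_0^{-a_\ell/a_0^2}$ at $x$ is $\frac{b_\ell}{a_0} - \frac{a_\ell b_0}{a_0^2} = \frac{a_0 b_\ell - b_0 a_\ell}{a_0^2}$, which is precisely $F_\ell(V,A)$ of Proposition \ref{prop::higher_Futaki}. Likewise, using $\rk f_*(\mathcal{L}) = \chi(V,A)$ from \eqref{eq::rk_exp} (valid since $A$ is very ample without higher cohomology) and the weight $w(V,A)$ of $\det f_*(\mathcal{L})$, the weight of $\lambda_{Chow} = \det f_*(\mathcal{L})^{1/\rk f_*(\mathcal{L})} \otimes \mu_0^{-1/a_0}$ at $x$ is $\frac{w(V,A)}{\chi(V,A)} - \frac{b_0}{a_0} = \Chow(V,A)$, matching Definition \ref{defn::CM-weight}.

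The step I expect to require the most care is establishing that \eqref{eq::det=det} holds as an isomorphism of $\mathbb{C}^*$-representations and not merely of one-dimensional vector spaces: the identification rests on cohomology-and-base-change holding for $k \gg 0$, and I would have to check that the comparison isomorphism $f_*(\mathcal{L}^k)_x \cong H^0(V, A^k)$ intertwines the two $\mathbb{C}^*$-actions. Once this equivariance is secured, everything else is formal. I would finish by noting that since the computed weights reproduce $F_\ell(V,A)$ and $\Chow(V,A)$ for every one-parameter subgroup and every fixed point $x$, and Mumford weights are determined by exactly this data, the asserted identification of $\lambda_{\CM,\ell}$ and $\lambda_{Chow}$ with the two invariants is complete.
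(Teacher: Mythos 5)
Your argument is correct and follows essentially the same route as the paper: both fix a one-parameter subgroup and a fixed point $x$, use the equivariance of the universal family and \eqref{eq::det=det} to identify the Mumford weight of $\det f_*(\mathcal L^k)|_x$ with $w(V,A^k)$, and then read off the weights of the $\mu_\ell$ from the Knudsen--Mumford expansion \eqref{eq::KM_exp}. Your version merely spells out the final bookkeeping for $\lambda_{Chow}$ and $\lambda_{\CM,\ell}$ and the equivariance of the base-change isomorphism, which the paper leaves implicit.
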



\section[Projective bundles over curves]{Projective bundles over curves}\label{sec::prob}

Let $\Sigma$ be a genus $g \geq 2$ smooth curve and let $E$ be a rank $n\geq 2$ vector bundle on $\Sigma$. Let $M=\mathbb P(E)$ be the projective bundle associated to $E$ and denote by $\pi: M \to \Sigma$ the projection. Each line bundle on $\mathbb P(E)$ has the form $L=\mathcal O_{\mathbb P(E)}(r) \otimes \pi^*B$, where $\mathcal O_{\mathbb P(E)}(1)$ the fiberwise hyperplane bundle, $B$ is a line bundles on $\Sigma$, and $r \in \mathbb Z$. Assume that $r$ and $B$ are such that $L$ is ample. The pair $(M,L)$ turns to be an $n$-dimensional polarized manifold, and if $E = E_1 \oplus \dots \oplus E_s$ is the (unique up to the order) decomposition in indecomposable vector bundles, then every $\mathbb C^*$-action on $\mathbb P(E)$ is induced by a fiberwise action on $E$ of the form $$ t \cdot (e_1, \dots, e_s) = (t^{\lambda_1}e_1, \dots, t^{\lambda_s}e_s),$$ with $\lambda_j \in \mathbb Z$. Finally let $\lambda_0$ be the weight of a fiberwise $\mathbb C^*$-action on $B$.

\begin{prop}\label{prop::Chow_PE}
In the situation above we have $$\Chow(M,L^k) = \frac{\binom{n-1+kr}{n}}{n+1}\frac{\chi\left(\Sigma,\det \left(E\otimes B^{-\frac{1}{r}}\right)\right)}{\mu\left(E\otimes B^{-\frac{1}{r}}\right) \chi\left(\Sigma,S^{kr}\left(E^*\otimes B^\frac{1}{r}\right)\right)} \sum_{j=1}^s \lambda_j \rank (E_j) \left( \mu(E_j)-\mu(E) \right),$$ where $\mu(F) = \deg(F) / \rank(F) $ is the slope of the bundle $F$, and.
\end{prop}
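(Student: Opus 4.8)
The plan is to push everything down to the base curve $\Sigma$, where both the cohomology and the $\mathbb C^*$-weights are governed directly by the bundle $E$. Using $\pi_*(\mathcal O_{\mathbb P(E)}(kr)\otimes\pi^*B^k)=S^{kr}E^*\otimes B^k=S^{kr}(E^*\otimes B^{\frac1r})$ and the vanishing $R^q\pi_*L^k=0$ for $q>0$ (the fibres being projective spaces and $kr\geq0$), the Leray spectral sequence yields a $\mathbb C^*$-equivariant isomorphism $H^\bullet(M,L^k)\cong H^\bullet(\Sigma,S^{kr}(E^*\otimes B^{\frac1r}))$; by ampleness of $L$ the higher cohomology of both sides vanishes for $k\gg0$, so $\chi(M,L^k)=\chi(\Sigma,S^{kr}(E^*\otimes B^{\frac1r}))$ and $w(M,L^k)$ is the trace of the infinitesimal generator on this space. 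Decomposing $S^{kr}E^*=\bigoplus_{|\alpha|=kr}\bigotimes_jS^{\alpha_j}E_j^*$ along the indecomposable summands $E=\bigoplus_jE_j$, the piece indexed by $\alpha=(\alpha_1,\dots,\alpha_s)$ carries $\mathbb C^*$-weight $-\sum_j\lambda_j\alpha_j+k\lambda_0$, the factor $t^{-\lambda_j}$ coming from the action on $E_j^*$.

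Two reductions from the definition of $\Chow$ simplify the bookkeeping. Since $\Chow$ is insensitive to the linearization (Proposition \ref{prop::higher_Futaki}), the summand $k\lambda_0$ contributes $k\lambda_0\,\chi(M,L^k)$ to $w$, which is exactly a linearization shift and may be dropped; in particular $\lambda_0$ will not appear in the answer. Moreover, directly from the definitions $a_0(M,L^k)=a_0(M,L)\,k^n$ and $b_0(M,L^k)=b_0(M,L)\,k^{n+1}$, so $\Chow(M,L^k)=\bigl(w(M,L^k)-\tfrac{b_0(M,L)}{a_0(M,L)}\,k\,\chi(M,L^k)\bigr)/\chi(M,L^k)$. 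Everything thus reduces to computing $w(M,L^k)$ and $\chi(M,L^k)$ as explicit functions of $k$, together with the two leading coefficients $a_0(M,L)$, $b_0(M,L)$ read off from them.

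The computation is carried out on $\Sigma$ by Riemann--Roch. Writing $r_j=\rank E_j$, $\tilde\mu_i=\mu(E_i\otimes B^{-\frac1r})=\mu(E_i)-\tfrac{\deg B}{r}$ and $R_\alpha=\prod_j\binom{\alpha_j+r_j-1}{r_j-1}$ for the rank of the $\alpha$-summand, the formula $\deg S^mV=\binom{m+\rank V-1}{\rank V}\deg V$ together with additivity of degree under tensor product collapses the Euler characteristic of each piece to $\chi(\Sigma,\bigotimes_jS^{\alpha_j}E_j^*\otimes B^k)=R_\alpha\bigl(-\sum_i\alpha_i\tilde\mu_i+(1-g)\bigr)$. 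Assembling $\chi(M,L^k)$ and $w(M,L^k)$ then requires only the moment sums $\sum_\alpha R_\alpha$, $\sum_\alpha\alpha_jR_\alpha$ and $\sum_\alpha\alpha_i\alpha_jR_\alpha$ over $\{|\alpha|=kr\}$. I would evaluate these from the generating function $\sum_\alpha R_\alpha\prod_lx_l^{\alpha_l}=\prod_l(1-x_l)^{-r_l}$: applying $x_l\partial_{x_l}$ and then setting all $x_l=x$ turns each moment into a coefficient of $(1-x)^{-n}$ times a monomial, giving $\sum_\alpha R_\alpha=\binom{kr+n-1}{n-1}$, $\sum_\alpha\alpha_jR_\alpha=r_j\binom{kr+n-1}{n}$ and $\sum_\alpha\alpha_i\alpha_jR_\alpha=r_ir_j\binom{kr+n-1}{n+1}+\delta_{ij}r_j\bigl(\binom{kr+n-1}{n}+\binom{kr+n-1}{n+1}\bigr)$.

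The main obstacle is the final assembly and the cancellation it hides. Substituting the moments expresses both $\chi(M,L^k)$ and $w(M,L^k)$ as $\mathbb Q$-linear combinations of the three binomials $\binom{kr+n-1}{n-1},\binom{kr+n-1}{n},\binom{kr+n-1}{n+1}$, from whose leading terms one reads $b_0(M,L)/a_0(M,L)$. Reducing everything to multiples of $\binom{kr+n-1}{n}$ via $\binom{kr+n-1}{n+1}=\tfrac{kr-1}{n+1}\binom{kr+n-1}{n}$ and $\binom{kr+n-1}{n-1}=\tfrac{n}{kr}\binom{kr+n-1}{n}$, I expect the delicate point to be that, after dividing $w(M,L^k)-\tfrac{b_0}{a_0}k\,\chi(M,L^k)$ by $\binom{kr+n-1}{n}$, the terms linear in $k$ cancel in pairs (those proportional to $\sum_j\lambda_jr_j$ against one another, and likewise those proportional to $\sum_j\lambda_jr_j\tilde\mu_j$), leaving a $k$-independent quantity equal to $\tfrac1{n+1}\bigl(n+\tfrac{1-g}{\tilde\mu}\bigr)\bigl(\sum_j\lambda_jr_j\tilde\mu_j-\tilde\mu\sum_j\lambda_jr_j\bigr)$, with $\tilde\mu=\mu(E\otimes B^{-\frac1r})$. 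Finally I would recognize $n+\tfrac{1-g}{\tilde\mu}=\chi(\Sigma,\det(E\otimes B^{-\frac1r}))/\mu(E\otimes B^{-\frac1r})$ and $\sum_j\lambda_jr_j\tilde\mu_j-\tilde\mu\sum_j\lambda_jr_j=\sum_j\lambda_j\rank(E_j)\bigl(\mu(E_j)-\mu(E)\bigr)$, and divide by $\chi(M,L^k)=\chi(\Sigma,S^{kr}(E^*\otimes B^{\frac1r}))$ to obtain the stated expression.
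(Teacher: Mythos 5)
Your argument is correct and it reaches the stated formula, but the core computation is organized differently from the paper's. Both proofs share the same skeleton: push everything down to $\Sigma$ via the equivariant isomorphism $\pi_*L^k\simeq S^{kr}E^*\otimes B^k$, compute $\chi$ and $w$ there, and subtract the linear-in-$k$ term of $w/\chi$. The difference lies in how $w(\Sigma,S^{kr}E^*\otimes B^k)$ is obtained. The paper works inside equivariant cohomology: it fixes invariant Hermitian metrics, writes the equivariant Chern classes via Chern--Weil theory in the Cartan model ($c_1(E)=[\tr(\Theta)+\tr(\Lambda)]$, etc.), computes the equivariant $\ch_2(S^{kr}E^*)$ by a ``tedious but elementary'' calculation, and applies the equivariant Hirzebruch--Riemann--Roch theorem. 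You instead decompose $S^{kr}E^*=\bigoplus_{|\alpha|=kr}\bigotimes_jS^{\alpha_j}E_j^*$ into weight subbundles, apply ordinary Riemann--Roch on the curve to each summand, and evaluate the first and second moments of $R_\alpha$ from the generating function $\prod_l(1-x_l)^{-r_l}$. I checked your three moment identities and the final assembly: after subtracting $\frac{b_0}{a_0}k\chi$ and dividing by $\binom{n-1+kr}{n}$ one is indeed left with $\frac{1}{n+1}\bigl(n+\frac{1-g}{\tilde\mu}\bigr)\bigl(\sum_j\lambda_jr_j\tilde\mu_j-\tilde\mu\sum_j\lambda_jr_j\bigr)$, which coincides with the paper's answer; and your discarding of the $\lambda_0$-term as a pure linearization shift is legitimate by Proposition \ref{prop::higher_Futaki} (in the paper's final formula this is visible in the fact that the whole $k\bigl(\lambda_0-\frac{r}{n}\tr(\Lambda)\bigr)$ block is proportional to $\chi(M,L^k)$). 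What each route buys: yours is more elementary and self-contained --- no equivariant Chern--Weil input, and the combinatorics of $\ch_2(S^{kr}E^*)$ is replaced by transparent generating-function bookkeeping tied directly to the weight decomposition of $E$; the paper's is uniform with the equivariant Riemann--Roch framework used throughout the article and is the formulation that extends beyond one-dimensional bases, where the summands $\bigotimes_jS^{\alpha_j}E_j^*$ would no longer be handled by a single Riemann--Roch identity on a curve.
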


\begin{rem}
Although $E\otimes B^{-\frac{1}{r}}$ might not be a genuine vector bundle on $\Sigma$, its Chern classes are well defined as well as the formula above.
\end{rem}

\begin{proof}
By Definition \ref{defn::CM-weight} of Chow weight we need to calculate $\chi(M,L^k)$ and $w(M,L^k)$. By projection formula we have $\pi_* (L^k) \simeq S^{kr}E^* \otimes B^k$ equivariantly, thus $\chi(M,L^k) = \chi(\Sigma, S^{kr} E^* \otimes B^k)$ and $w(M,L^k) = w(\Sigma, S^{kr} E^* \otimes B^k)$.

Thus, by Hirzebruch-Riemann-Roch theorem and equalities $$\rank(S^{kr} E^*) = \binom{n-1+kr}{kr} \qquad \mbox{ and } \qquad c_1(S^{kr} E^*) = -\binom{n-1+kr}{kr-1} c_1(E)$$  we have
\begin{eqnarray*}
\chi(M, L^k) &=& \int_\Sigma \ch(S^{kr}E^*) \ch(B^k) \td(\Sigma) \\
&=& \int_\Sigma c_1(S^{kr}E^*) + \rk(S^{kr}E^*) \int_\Sigma c_1(B^k) + \rk(S^{kr}E^*) \int_\Sigma \frac{c_1(\Sigma)}{2}\\
\\
&=& -\binom{n-1+kr}{kr-1} \int_\Sigma c_1(E) + \binom{n-1+kr}{kr} \left( k \int_\Sigma c_1(B)+ \int_\Sigma \frac{c_1(\Sigma)}{2} \right) \\
&=& \binom{n-1+kr}{kr} \left( 1-g-kr \mu(E) + k \deg(B) \right).
\end{eqnarray*}

To calculate $w(M,L^k)$, which is equal to $w(\Sigma, S^{kr} E^* \otimes B^k)$ as noticed above, we use the Equivariant Hirzebruch-Riemann-Roch theorem. We have to integrate the degree $(2,2)$ component of the equivariant class $\ch(S^{kr} E^*) \ch(B^k) \td(\Sigma)$ (here we are using the same notations as non-equivariant case since we deal only with equivariant classes until the end of the proof). Since the given fiberwise actions on $E$ and $B$ cover the trivial action on $\Sigma$, the equivariant class $\td(\Sigma) = 1 + \frac{c_1(\Sigma)}{2}$ has no degree $(2,2)$ part. On the other hand by tedious but elementary calculation we get $$ \ch_2(S^{kr} E^*) = \binom{n+kr}{kr-1} \frac{c_1(E)^2 - 2c_2(E)}{2} + \binom{n-1+kr}{kr-2} \frac{c_1(E)^2}{2},$$
hence
\begin{eqnarray*}
w(M,L^k) &=& \int_\Sigma \ch_2(S^{kr} E^*) + \int_\Sigma c_1(S^{kr} E^*) \frac{c_1(\Sigma)}{2} + \int_\Sigma c_1(S^{kr} E^*) c_1(B^k) \\
&& + \rk(S^{kr}E^*) \int_\Sigma c_1(B^k)\frac{c_1(\Sigma)}{2} + \rk(S^{kr}E^*) \int_\Sigma \frac{c_1(B^k)^2}{2}\\ 
&=& \binom{n+kr}{kr-1} \int_\Sigma \frac{c_1(E)^2 - 2c_2(E)}{2} + \binom{n-1+kr}{kr-2} \int_\Sigma \frac{c_1(E)^2}{2} \\
&& - \binom{n-1+kr}{kr-1}\int_\Sigma \frac{c_1(E) c_1(\Sigma)}{2} - \binom{n-1+kr}{kr-1} k \int_\Sigma c_1(E) c_1(B) \\
&& + \binom{n-1+kr}{kr} k \int_\Sigma \frac{c_1(B)c_1(\Sigma)}{2} + \binom{n-1+kr}{kr} k^2 \int_\Sigma \frac{c_1(B)^2}{2}.
\end{eqnarray*} 

We will use the Cartan model of equivariant cohomology to calculate the equivariant characteristic numbers above. To this end, fix an $S^1$-invariant Hermitian metric on each $E_j$ of the decomposition $E=E_1\oplus\dots\oplus E_s$. The sum of such metrics gives an Hermitian metric on $E$, whose equivariant curvature of the compatible connection has the form $\Theta + \Lambda$, where $\Theta=\Theta_1\oplus\dots\oplus\Theta_s$ is the direct sum of non-equivariant curvatures of the fixed metrics on the $E_j$'s, and $\Lambda$ is the endomorphism of $E$ represented by $\lambda_1 I_1\oplus\dots\oplus\lambda_s I_s$, being $I_j$ the identity on $E_j$. By equivariant Chern-Weil theory we have $c_1(E) = [\tr (\Theta) + \tr (\Lambda)]$ and $c_1(E)^2 - 2 c_2(E) = [2\tr (\Theta\Lambda) + \tr(\Lambda^2)]$, and by construction $\tr(\Lambda) = \sum_{j=1}^s \lambda_j \rank(E_j)$ and $\tr(\Theta\Lambda) = \sum_{j=1}^s \lambda_j \tr(\Theta_j)$. In the same way, if we fix an Hermitian metric on $B$ with curvature $\omega$, we have $c_1(B)=[\omega+\lambda_0]$. We calculate

\begin{eqnarray*}
w(M,L^k) &=& \binom{n+kr}{kr-1} \int_\Sigma \tr(\Theta\Lambda) + \binom{n-1+kr}{kr-2} \tr(\Lambda) \int_\Sigma \tr(\Theta) \\
&& - \binom{n-1+kr}{kr-1}\tr(\Lambda) \int_\Sigma \frac{c_1(\Sigma)}{2} - \binom{n-1+kr}{kr-1} k \left( \lambda_0 \int_\Sigma \tr (\Theta) + \tr (\Lambda) \int_\Sigma \omega \right) \\
&& + \binom{n-1+kr}{kr} k \lambda_0 \int_\Sigma \frac{c_1(\Sigma)}{2} + \binom{n-1+kr}{kr} k^2 \lambda_0 \int_\Sigma \omega. \\
&=& \binom{n-1+kr}{kr}\left[ \frac{kr(n+kr)}{n(n+1)} \int_\Sigma \tr(\Theta\Lambda) + \frac{kr(kr-1)}{n+1} \tr(\Lambda) \mu(E) - \frac{kr}{n}\tr(\Lambda) (1-g) \right. \\
&& \left. - \frac{k^2r}{n} \tr(\Lambda) \int_\Sigma \omega - \frac{k^2r}{n} \lambda_0 \int_\Sigma \tr (\Theta) +k \lambda_0 (1-g) + k^2 \lambda_0 \int_\Sigma \omega. \right] \\
&=& \binom{n-1+kr}{kr}\left[ \frac{kr(n+kr)}{n(n+1)} \sum_{j=1}^s \lambda_j \rank (E_j) \left(\mu(E_j) - \mu(E)\right) \right. \\
&& \left. + k \left(\lambda_0 - \frac{r}{n}\tr(\Lambda) \right) (1-g-kr\mu(E) + k \deg(B))\right].
\end{eqnarray*}

By Definition \ref{defn::CM-weight}, subtracting from the ratio $w(M,L^k)/\chi(M,L^k)$ the linear term in $k$, we get
\begin{multline*} 
\Chow(M,L^k) = \\ \frac{kr}{n(n+1)} \frac{1-g+\deg(E)-\frac{n}{r}\deg(B)}{(\mu(E) - \frac{1}{r} \deg(B)) \left( 1-g-kr\mu(E)+k \deg(B) \right)} \sum_{j=1}^s \lambda_j \rank (E_j) \left(\mu(E_j) - \mu(E)\right),
\end{multline*} and the thesis follows.
\end{proof}

At this point we recall the following

\begin{defn}
A vector bundle $E$ over a smooth curve $\Sigma$ is called
\begin{itemize}
\item \emph{slope semistable} if $\mu(F) \leq \mu(E)$ for every subbundle $F \subset E$,
\item \emph{slope stable} if $\mu(F) < \mu(E)$ for every subbundle $F \subset E$,
\item \emph{slope polystable} if $E$ decomposes as $E=E_1 \oplus \dots \oplus E_s$ where the $E_j$'s are stable vector bundles satisfying $\mu(E_j)=\mu(E)$.
\end{itemize}
\end{defn}

We also recall that for every line bundle $B$ on $\Sigma$ we have $\mu(E\otimes B) = \mu(E) + \deg(B)$, thus $E$ is slope (semi/poly)stable if and only if $E \otimes B$ is.
 
Thanks to Proposition \ref{prop::Chow_PE} the invariants $F_\ell$ defined in the statement of Proposition \ref{prop::higher_Futaki} can be easily calculated in the considered case.

\begin{cor}\label{cor::Futaki=Chow} 
In the situation above we have $\Chow(M,L^k)=0$ if and only if $F_\ell(M,L)=0$ for some (and hence any) $\ell=1,\dots,n$. More precisely there are rational constants $C_\ell>0$, depending only on  $n$, such that
\begin{equation}\label{eq::F_ell} F_\ell (M,L) = - C_\ell \frac{\chi(\Sigma,\det (E \otimes B^{-\frac{1}{r}}))}{\mu(E \otimes B^{-\frac{1}{r}})^2} \sum_{j=1}^s \lambda_j \rank (E_j) \left( \mu(E_j)-\mu(E) \right). \end{equation}
In particular $F_\ell(M,L)=0$ for all $\mathbb C^*$-actions if and only if $ \mu(E_j) = \mu(E)$ for all $j = 1, \dots, s.$
\end{cor}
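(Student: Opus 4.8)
The plan is to extract the invariants $F_\ell(M,L)$ directly from the closed formula for $\Chow(M,L^k)$ in Proposition \ref{prop::Chow_PE}, by comparing it with the general expansion of the Chow weight in terms of the $F_\ell$ established in Section \ref{sec::HiCM}.

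Since $M=\mathbb{P}(E)$ is smooth, the localization argument recalled before Proposition \ref{prop::higher_Futaki} gives $b_{n+1}(M,L)=0$, so the expansion of the Chow weight following \eqref{eq::asym_w(V,A^k)} reduces to
$$\Chow(M,L^k) = \frac{a_0(M,L)}{\chi(M,L^k)}\sum_{\ell=1}^n F_\ell(M,L)\,k^{n+1-\ell}.$$
On the other hand $S^{kr}(E^*\otimes B^{1/r}) \simeq S^{kr}E^*\otimes B^k$, so by the projection formula used in the proof of Proposition \ref{prop::Chow_PE} the factor $\chi(\Sigma,S^{kr}(E^*\otimes B^{1/r}))$ in the denominator of that formula is nothing but $\chi(M,L^k)$. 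Multiplying both expressions for $\Chow(M,L^k)$ by $\chi(M,L^k)$ I would therefore arrive at the polynomial identity
$$a_0(M,L)\sum_{\ell=1}^n F_\ell(M,L)\,k^{n+1-\ell} = \frac{1}{n+1}\binom{n-1+kr}{n}\,\frac{\chi(\Sigma,\det(E\otimes B^{-1/r}))}{\mu(E\otimes B^{-1/r})}\sum_{j=1}^s\lambda_j\rank(E_j)(\mu(E_j)-\mu(E)),$$
in which every factor on the right apart from $\binom{n-1+kr}{n}$ is independent of $k$.

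Both sides are polynomials in $k$ of degree $n$ with vanishing constant term, so I would match the coefficient of $k^{n+1-\ell}$ for each $\ell$. This identifies $a_0(M,L)F_\ell(M,L)$ with the (positive rational) coefficient of $k^{n+1-\ell}$ in $\frac{1}{n+1}\binom{n-1+kr}{n}$ times the remaining $k$-independent factor; dividing by $a_0(M,L)=-\frac{r^n}{(n-1)!}\mu(E\otimes B^{-1/r})$, read off from the formula for $\chi(M,L^k)$ computed in the proof of Proposition \ref{prop::Chow_PE}, produces \eqref{eq::F_ell} with an explicit $C_\ell>0$. In particular all the $F_\ell$ are the \emph{same} nonzero multiple of the single quantity $\sum_j\lambda_j\rank(E_j)(\mu(E_j)-\mu(E))$, and hence of $\Chow(M,L^k)$; the equivalence $\Chow(M,L^k)=0 \Leftrightarrow F_\ell(M,L)=0$ for some, and then every, $\ell$ is then immediate.

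For the last assertion I would view $\sum_j\lambda_j\rank(E_j)(\mu(E_j)-\mu(E))$ as a linear form in the weights $(\lambda_1,\dots,\lambda_s)$, which range freely (a common shift leaves the form unchanged because $\sum_j\rank(E_j)(\mu(E_j)-\mu(E))=\deg(E)-\rank(E)\mu(E)=0$). Such a form vanishes for every $\mathbb{C}^*$-action exactly when each coefficient $\rank(E_j)(\mu(E_j)-\mu(E))$ does, i.e. when $\mu(E_j)=\mu(E)$ for all $j$. The only steps needing genuine care are the two non-vanishing assertions underlying these equivalences: that the constants $C_\ell$ are strictly positive, and that the common geometric factor $\chi(\Sigma,\det(E\otimes B^{-1/r}))/\mu(E\otimes B^{-1/r})^2$ does not vanish. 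I expect the latter to be the main point, and to be settled by the hypotheses rather than by computation: ampleness of $L$ forces $a_0(M,L)>0$ and hence $\mu(E\otimes B^{-1/r})<0$, whereupon $g\geq 2$ makes $\chi(\Sigma,\det(E\otimes B^{-1/r}))=n\,\mu(E\otimes B^{-1/r})+1-g$ strictly negative, so the factor never degenerates.
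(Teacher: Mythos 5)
Your proposal is correct and follows essentially the same route as the paper: both read off $F_\ell(M,L)$ as the coefficient of $k^{n+1-\ell}$ in $\chi(M,L^k)\Chow(M,L^k)/a_0(M,L)$ using the closed formula of Proposition \ref{prop::Chow_PE}, identify the $C_\ell$ as the (positive) coefficients of the binomial factor, and rule out degeneration of the geometric prefactor via ampleness of $L$ (forcing $\mu(E\otimes B^{-1/r})<0$) together with $g\geq 2$. The only additions are cosmetic --- you make explicit the linear-form argument in the $\lambda_j$'s for the final assertion, which the paper leaves implicit --- and the phrase ``the same nonzero multiple'' should read ``a nonzero multiple'' since the $C_\ell$ differ with $\ell$.
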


\begin{proof}
By definition, $F_\ell(M,L)$ is the coefficient of $k^{n+1-\ell}$ in $\frac{\chi(M,L^k)}{a_0(M,L)}\Chow(M,L^k)$. Thanks to Proposition \ref{prop::Chow_PE} (and its proof) we have $a_0(M,L) = -\frac{\mu(E \otimes B^{-\frac{1}{r}})}{(n-1)!}$ and
$$ \frac{\chi(M,L^k)}{a_0(M,L)}\Chow(M,L^k) = -\frac{(n-1)!}{n+1}\binom{n-1+k}{n}\frac{\chi(\Sigma,\det E \otimes B^{-\frac{1}{r}})}{\mu(E \otimes B^{-\frac{1}{r}})^2} \sum_{j=1}^s \lambda_j \rank (E_j) \left( \mu(E_j)-\mu(E) \right). $$
Thus \eqref{eq::F_ell} is proved with constants $C_\ell$ generated by $$ \sum_{\ell=1}^n C_\ell \, k^{n+1-\ell} = \frac{1}{n(n+1)} \prod_{i=0}^{n-1} (k+i).$$

To prove the last assertion it remains to show that $\chi(\Sigma,\det (E\otimes B^{-\frac{1}{r}}))$ is non-zero. Ampleness of $L$ implies $\chi(M,L^k)>0$ for $k\gg 0$, but on the other hand $\chi(M,L^k) = -\frac{\deg(E \otimes B^{-\frac{1}{r}})}{n!}k^n + O(k^{n-1})$, thus we get $\deg(E \otimes B^{-\frac{1}{r}})<0$. By that and $g\geq 2$ we obtain $$\chi(\Sigma,\det(E\otimes B^{-\frac{1}{r}})) = \deg(E\otimes B^{-\frac{1}{r}}) +1-g <0,$$ and we are done.\end{proof}

The relation between slope stability of $E$ and Chow (and Hilbert) stability of $\mathbb P(E)$ has been initially investigated by Morrison \cite{Mor80} when $\rank(E)=2$. He conjectured that the Chow and Hilbert stability of $\mathbb P(E)$ are equivalent to slope stability of $E$ \cite[Conjecture 5.10]{Mor80}. Combining theorems above and results of Mabuchi \cite{Mab05}, we get the following result which proves the Morrison's conjecture for $E$ of every rank, with the extra hypothesis that the chosen polarization on $\mathbb P(E)$ is sufficiently divisible.

\begin{thm}\label{thm::Morrison_conj}
A projective bundle $\mathbb P(E)$ over a curve of genus $g \geq 2$ is asymptotically Chow (poly)stable with respect to some (and hence any) polarization if and only if $E$ is slope (poly)stable.  
\end{thm}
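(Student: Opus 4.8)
I would split the asserted equivalence into the polystable statement, which carries all the essential content, and the stable statement, which follows by a bookkeeping of $\mathbb C^*$-actions. Throughout I use that, since $g\geq 2$, the group $\Aut(\Sigma)$ is finite, so every $\mathbb C^*$-action on $M=\mathbb P(E)$ is fibrewise and is the kind described just before Proposition \ref{prop::Chow_PE}; in particular every product configuration of $(M,L)$ comes from a weight vector $(\lambda_1,\dots,\lambda_s)$ attached to the indecomposable decomposition $E=E_1\oplus\dots\oplus E_s$, and its Chow weight is the quantity computed in Proposition \ref{prop::Chow_PE}.

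\textbf{Slope polystable $\Rightarrow$ asymptotically Chow polystable.} If $E$ is slope polystable then, by Narasimhan--Seshadri \cite{NarSes65} (or by its differential-geometric consequence isolated by Apostolov--Calderbank--Gauduchon--T{\o}nnesen-Friedman \cite{ApoCalGauTon09}), $M$ carries a cscK metric representing $c_1(L)$ for every polarization $L$. Slope polystability means $E=\bigoplus_j E_j$ with each $E_j$ stable and $\mu(E_j)=\mu(E)$; since stable bundles are indecomposable, the $E_j$ are exactly the indecomposable summands, so Corollary \ref{cor::Futaki=Chow} gives $F_\ell(M,L)=0$ for every $\ell$ and every $\mathbb C^*$-action. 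Feeding the cscK metric together with this vanishing into Mabuchi's Theorem \ref{thm::Mabuchi} yields that $(M,L)$ is asymptotically Chow polystable. This direction is clean and applies verbatim to any sufficiently divisible polarization.

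\textbf{Asymptotically Chow polystable $\Rightarrow$ slope polystable.} Assume $(M,L)$ is asymptotically Chow polystable and suppose, for contradiction, that $E$ is not slope polystable. First I test against product configurations: polystability forces $\Chow(M,L^k)=0$ for every $\mathbb C^*$-action, and since the prefactor in Proposition \ref{prop::Chow_PE} is a nonzero quantity independent of the weights (it is nonvanishing precisely by the computation in the proof of Corollary \ref{cor::Futaki=Chow}), this is equivalent to $\sum_j \lambda_j \rank(E_j)(\mu(E_j)-\mu(E))=0$ for all $(\lambda_j)$, i.e.\ to $\mu(E_j)=\mu(E)$ for every indecomposable summand. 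This rules out the crudest failures of stability but, because an indecomposable bundle of slope $\mu(E)$ may still be slope-unstable or merely semistable, it is far from sufficient. The decisive step exploits non-product configurations: when $E$ is not polystable there is a subbundle $V\subsetneq E$ with $\mu(V)\geq \mu(E)$, and I would form the deformation of $M$ to the normal cone of $\mathbb P(V)\subset\mathbb P(E)$ in the manner of Ross--Thomas \cite{RosTho06}, computing its Chow weight by an equivariant Riemann--Roch computation analogous to that of Proposition \ref{prop::Chow_PE}. The sign of this weight should be governed by $\mu(V)-\mu(E)$, so that a genuinely destabilizing $V$ (with $\mu(V)>\mu(E)$) produces a configuration of positive Chow weight, contradicting semistability.

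\textbf{Main obstacle, and the stable case.} The hard point is the borderline case $\mu(V)=\mu(E)$ with $V$ not a direct summand, that is an indecomposable semistable but non-polystable bundle such as a non-split self-extension of a stable bundle. Here the weight above vanishes to leading order and, crucially, the product-configuration test is vacuous since such bundles admit no nontrivial $\mathbb C^*$-action. One must therefore verify that the deformation to the normal cone is a \emph{non-product} test configuration of Chow weight \emph{exactly} zero, directly contradicting the definition of polystability; checking that the full Chow weight (not merely its leading Donaldson--Futaki term) vanishes, and that the configuration is genuinely non-product, is the technical heart of the argument and the step I expect to require the most care. Granting the polystable equivalence, the stable case is formal: for $g\geq 2$ a slope stable $E$ is simple, so $M$ carries no nontrivial $\mathbb C^*$-action and every product configuration is trivial, whence Chow polystability upgrades to Chow stability; conversely asymptotic Chow stability forbids nontrivial product configurations, forcing $E$ indecomposable, and an indecomposable slope polystable bundle is stable.
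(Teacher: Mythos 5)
Your forward implication (Narasimhan--Seshadri $+$ Corollary \ref{cor::Futaki=Chow} $+$ Mabuchi) and your treatment of product configurations coincide with the paper's proof, and you have correctly located where the real difficulty sits. But that difficulty is exactly the part you do not prove: for an indecomposable $E$ that is not polystable --- whether unstable or strictly semistable --- there is \emph{no} nontrivial $\mathbb C^*$-action on $\mathbb P(E)$, so everything hinges on producing an explicit non-product test configuration and computing its \emph{exact} Chow weight. You propose deformation to the normal cone of $\mathbb P(V)\subset\mathbb P(E)$ and then write that the sign of the weight ``should be governed by $\mu(V)-\mu(E)$'' and that verifying exact vanishing in the borderline case ``is the step I expect to require the most care.'' That is a statement of the problem, not a solution; moreover the normal-cone central fiber is a reducible, non-smooth scheme to which Proposition \ref{prop::Chow_PE} does not apply, so the computation you would need is genuinely harder than anything established earlier in the paper, and it is not clear a priori that the weight is exactly zero rather than merely nonpositive.

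The paper closes this gap with a different and much cleaner degeneration. Given $F\subset E$ with $\mu(F)\geq\mu(E)$ and $F$ not a direct summand, the extension class $e\in\Ext^1(E/F,F)$ is nonzero; one builds (via an injective resolution of $F$) a bundle $\mathcal E$ on $\Sigma\times\mathbb C$ whose restriction to $\Sigma\times\{z\}$ is the extension of $E/F$ by $F$ with class $ze$. Then $\mathbb P(\mathcal E)\to\mathbb C$ is a non-product test configuration for $\mathbb P(E)$ whose central fiber is the \emph{smooth} projective bundle $\mathbb P(F\oplus E/F)$ acted on by $\diag(t,1)$. Proposition \ref{prop::Chow_PE} now applies verbatim to the central fiber and gives a Chow weight equal to a positive constant times $\rank(F)\left(\mu(F)-\mu(E)\right)\geq 0$: strictly positive weight contradicts semistability when $\mu(F)>\mu(E)$, and zero weight on a non-product configuration contradicts polystability when $\mu(F)=\mu(E)$. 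If you want to salvage your write-up, replace the deformation to the normal cone by this degeneration of the extension class; your bookkeeping for the stable case is then fine as stated.
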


\begin{proof}
Assume that $E$ is slope polystable. In this case from a well-known result of Narasimhan and Seshadri \cite{NarSes65} it follows that $\mathbb P(E)$ admits a cscK metric in each K\"ahler class. Moreover, by Corollary \ref{cor::Futaki=Chow} all the invariants $F_\ell$ are zero. Thus by Mabuchi's Theorem \ref{thm::Mabuchi}, $\mathbb P(E)$ is asymptotically Chow polystable. In particular, if $E$ is stable then it is indecomposable and $\mathbb P(E)$ admits no nontrivial $\mathbb C^*$-actions and it is stable.

Now assume that $E$ is not slope polystable. This means that we have a subbundle $F \subset E$ such that either $\mu(F) > \mu(E)$ and $E=F \oplus E/F$, or $\mu(F) \geq \mu(E)$ and $F$ is not a direct summand. In the first case, $\mathbb P(E)$ is not asymptotically Chow polystable since, taking the $\mathbb C^*$-action defined by $\diag(t,1)$, the (asymptotic) Chow weight is non-zero thanks to Proposition \ref{prop::Chow_PE}. In the second case, since $F$ is not a direct summand of $E$, the exact sequence 
$$ 0 \to F \to E \to E/F \to 0$$ 
gives a non-zero element $e \in \Ext^1(E/F,F)$ by which one can construct a non product test configuration for $\mathbb P(E)$ with central fiber isomorphic to $\mathbb P(F \oplus E/F)$ and non-negative Chow weight (compare with Ross and Thomas \cite[Remark 5.14]{RosTho06}). We need to make explicit this construction to find the induced action on the central fiber and then compute the Chow weight. To this end, let $F \to (I^\bullet, d)$ be an injective resolution of $F$, so that $e$ is represented by a map $\epsilon: E/F \to I^1$ such that $d \circ \epsilon = 0$, and for all $z \in \mathbb C$ the element $ze$ is represented by the map $z\epsilon$. Thanks to the assumptions the sequence
$$ 0 \to F \to I^0 \oplus E/F \to E/F \to 0,$$ 
where the second arrow is the inclusion in $I^0$ and the third is the projection on $E/F$, is exact. Now consider the map $\psi: I^0 \oplus E/F \oplus \mathbb C \to I^1$ defined by $\psi(a,u,z) = d(a)+z\epsilon(u)$ and let $\mathcal E$ be its kernel. Clearly $\mathcal E$ projects over $E/F \oplus \mathbb C$, and for all $(f,z) \in F\oplus \mathbb C$ we have $\psi(f,0,z) =0$, thus we have the sequence 
$$ 0 \to F \oplus \mathbb C \to \mathcal E \to E/F \oplus \mathbb C \to 0, $$
which restrict for every $z \in \mathbb C$ to the extension of $E/F$ by $F$ represented by $ze \in \Ext^1(E/F,F)$. Since $\mathcal E$ is invariant for the $\mathbb C^*$-action on $I^0 \oplus E/F \oplus \mathbb C$ defined by $t \cdot (a,u,z) = (ta,u,tz)$, the projectivization $\mathbb P(\mathcal E) \to \mathbb C$ gives a non-product test configuration for $\mathbb P(E)$ with central fiber $\mathbb P(F \oplus E/F)$ acted on by $\mathbb C^*$ via $\diag(t,1)$. Thanks to Proposition \ref{prop::Chow_PE} the Chow weight of that configuration is non-negative, thus $\mathbb P(E)$ is not polystable.
\end{proof}

The slope stability of $E$ is related to the existence of cscK metrics on $\mathbb P(E)$ as stated in the following  

\begin{thm}[Apostolov, Calderbank, Gauduchon and T{\o}nnesen-Friedman \cite{ApoCalGauTon09}]\label{thm::Gaud_cscK} 
A projective bundle $\mathbb P(E)$ over a curve of genus $g\geq 2$ admits a cscK metric in some (and hence any) K\"ahler class if and only if $E$ is polystable. 
\end{thm}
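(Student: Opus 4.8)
The plan is to prove the two implications independently; in each direction the geometry of $\mathbb P(E)$ is controlled by the slope data of $E$ through the Narasimhan--Seshadri correspondence \cite{NarSes65}, which identifies slope polystability of $E$ with the existence of a projectively flat unitary structure. Since twisting $E$ by a line bundle alters neither $\mathbb P(E)$ nor slope polystability (as recalled before Corollary \ref{cor::Futaki=Chow}), I may normalise and reduce all statements to this dichotomy.

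For the direction ``$E$ polystable $\Rightarrow$ $\mathbb P(E)$ cscK'', I would first invoke Narasimhan--Seshadri: writing $E=E_1\oplus\dots\oplus E_s$ with the $E_j$ stable of equal slope, each summand is projectively flat, and hence so is $E$. Equivalently $\mathbb P(E)$ is a \emph{flat} $\mathbb P^{N-1}$-bundle, arising from a representation $\rho:\pi_1(\Sigma)\to PU(N)$; its pullback to the universal cover $\mathbb H$ of $\Sigma$ is the trivial product $\mathbb H\times\mathbb P^{N-1}$, on which $\pi_1(\Sigma)$ acts diagonally, by deck transformations on $\mathbb H$ and through $\rho$ on $\mathbb P^{N-1}$. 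Because $\rho$ lands in the isometry group of the Fubini--Study metric, for every $a,b>0$ the product metric $a\,\omega_{FS}+b\,\omega_{\mathbb H}$ (with $\omega_{\mathbb H}$ the hyperbolic metric, available since $g\ge 2$) is $\pi_1(\Sigma)$-invariant and descends to $\mathbb P(E)$. Being locally a Riemannian product of two cscK (indeed Einstein) factors, the descended metric is cscK. It then remains to check that, as $(a,b)$ ranges over $(0,\infty)^2$, these classes exhaust the K\"ahler cone of $\mathbb P(E)$; since semistability of $E$ makes $\mathcal O_{\mathbb P(E)}(1)$ nef, the K\"ahler cone is the two-dimensional cone spanned by $c_1(\mathcal O(1))$ and the fibre class, which is exactly the range of the product classes. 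This realises a cscK metric in \emph{every} K\"ahler class, giving the explicit substance behind \cite{ApoCalGauTon09}.

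For the converse I argue by contraposition. Suppose $E$ is not slope polystable, so there is a subbundle $F\subset E$ with $\mu(F)\ge\mu(E)$ that is either a direct summand with $\mu(F)>\mu(E)$, or is non-split. In the split case the fibrewise action $\diag(t,1)$ on $F\oplus E/F$ induces a $\mathbb C^*$-action on $\mathbb P(E)$ whose classical Futaki invariant $F_1$ is, by Corollary \ref{cor::Futaki=Chow}, a nonzero multiple of $\mu(F)-\mu(E)\neq 0$; the Matsushima--Futaki obstruction then rules out a cscK metric directly. In the non-split case $\mathbb P(E)$ may carry no nontrivial holomorphic vector field, so I would instead use the test configuration constructed in the proof of Theorem \ref{thm::Morrison_conj}, which degenerates $\mathbb P(E)$ to $\mathbb P(F\oplus E/F)$ with the induced $\diag(t,1)$-action and, by Proposition \ref{prop::Chow_PE}, has non-negative Chow weight. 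Since the existence of a cscK metric implies K-polystability \cite{Sto09,Mab08,Mab09}, this non-product destabilising configuration obstructs cscK. In either case $\mathbb P(E)$ admits no cscK metric.

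The existence direction, once reduced by Narasimhan--Seshadri, is essentially explicit, so the main obstacle is the strictly semistable \emph{non-split} case of the converse: here there is no one-parameter automorphism to feed into the classical Futaki invariant, and ruling out cscK genuinely requires the test-configuration language together with the implication cscK $\Rightarrow$ K-polystable. A secondary technical point is the K\"ahler-cone identification in the existence direction, needed to turn ``some class'' into ``any class''; but this is a standard consequence of the nefness of $\mathcal O_{\mathbb P(E)}(1)$ for semistable $E$.
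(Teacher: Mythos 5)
First, note that the paper does not prove this statement: it is imported verbatim from Apostolov--Calderbank--Gauduchon--T{\o}nnesen-Friedman \cite{ApoCalGauTon09} and used as a black box, so there is no internal proof to compare against, and your argument necessarily follows a different route from the cited source (which classifies extremal metrics on projective bundles by differential-geometric means rather than through stability). Your existence direction is correct and classical: Narasimhan--Seshadri makes $\mathbb P(E)$ a flat $PU(n)$-bundle, the local product metrics $a\,\omega_{FS}+b\,\omega_{\mathbb H}$ descend and are cscK, and their classes sweep out the two-dimensional K\"ahler cone. Only the last justification is imprecise: $\mathcal O_{\mathbb P(E)}(1)$ itself is nef only after the appropriate twist; what is true is that the descended fibrewise Fubini--Study form lies in the class $c_1(\mathcal O_{\mathbb P(E)}(1))-\mu(E)\pi^*\eta$ (up to sign conventions, $\eta$ the positive generator of $H^2(\Sigma)$), and the open cone generated by this class and the fibre class is exactly the K\"ahler cone. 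This is easily repaired.

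The converse is where the genuine gap lies. The split case via the nonvanishing of $F_1$ from Corollary \ref{cor::Futaki=Chow} is fine, since that formula extends by continuity to irrational classes. In the strictly semistable non-split case, however, your argument runs through a test configuration and the implication ``cscK $\Rightarrow$ K-polystable''. Two problems arise. First, when $\mu(F)=\mu(E)$ the Donaldson--Futaki invariant of your configuration vanishes, so you genuinely need K-\emph{poly}stability in the sense of Mabuchi \cite{Mab08,Mab09}, not merely semistability; this is a result at least as deep as the theorem being proved, so the argument is not self-contained (though it is consistent with how the paper itself treats the YTD conjecture at the end of Section \ref{sec::prob}). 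Second, and more seriously, test configurations and K-polystability are only defined for rational polarization classes, whereas the theorem asserts nonexistence of cscK metrics in \emph{every} K\"ahler class. Your contrapositive therefore only excludes cscK metrics in integral polarizations; irrational classes are not covered, and there is no obvious continuity argument available (by LeBrun--Simanca the cscK locus is open in the locus $F_1=0$, but it is not known to be closed there). Closing this case for arbitrary K\"ahler classes is precisely where the analytic input of \cite{ApoCalGauTon09} is needed.
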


Thus by theorems \ref{thm::Morrison_conj} and \ref{thm::Gaud_cscK} we get the following

\begin{thm}\label{thm::Chow_cscK}
A projective bundle $\mathbb P(E)$ over a curve of genus $g \geq 2$ is cscK if and only if it is asymptotically Chow polystable w.r.t. some (and hence any) polarization.
\end{thm}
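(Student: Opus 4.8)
The plan is to observe that the statement is a formal corollary: both sides of the desired equivalence have already been characterized, independently, in terms of a single intrinsic condition on $E$, namely its slope polystability, so the equivalence will follow by transitivity of the biconditional. Concretely, I would set as the common middle term the assertion ``$E$ is slope polystable'' and then feed it into the two equivalences established just above. First I would invoke Theorem \ref{thm::Morrison_conj}, which says that $\mathbb P(E)$ is asymptotically Chow polystable with respect to some (and hence any) polarization precisely when $E$ is slope polystable. Next I would invoke Theorem \ref{thm::Gaud_cscK} of Apostolov, Calderbank, Gauduchon and T{\o}nnesen-Friedman, which says that $\mathbb P(E)$ admits a cscK metric in some (and hence any) K\"ahler class precisely when $E$ is (slope) polystable. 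Chaining these two biconditionals through the shared condition yields exactly the claimed equivalence between being cscK and being asymptotically Chow polystable.

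In carrying this out, the only points requiring care are bookkeeping rather than mathematics. I would check that the two cited theorems really refer to the same notion of polystability for $E$ (in Theorem \ref{thm::Gaud_cscK} ``polystable'' is to be read as slope polystable, consistent with the definition recalled before Proposition \ref{prop::Chow_PE}), and that the quantifier structures match: each theorem carries its own internal ``some (and hence any)'' clause, so the resulting statement inherits the same clause on both the K\"ahler-class side and the polarization side, and no separate argument coupling a particular metric to a particular polarization is needed.

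I expect no genuine obstacle at this level, since the substantive work has been externalized into the two input results. The real content on the algebraic-geometric side is Theorem \ref{thm::Morrison_conj}, whose proof in turn rests on the explicit computation of Corollary \ref{cor::Futaki=Chow}, on Mabuchi's Theorem \ref{thm::Mabuchi}, and on the Narasimhan--Seshadri correspondence; the real content on the differential-geometric side is the analytic existence result Theorem \ref{thm::Gaud_cscK}. Once both are in hand, the present statement is obtained by simply composing them, so the proof reduces to a single transitivity step.
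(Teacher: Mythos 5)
Your proposal is correct and matches the paper exactly: the theorem is stated there as an immediate consequence of Theorem \ref{thm::Morrison_conj} and Theorem \ref{thm::Gaud_cscK}, chained through the common condition that $E$ be slope polystable. Your remarks about matching the polystability notions and the ``some (and hence any)'' quantifiers are sensible bookkeeping but require no additional argument.
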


Theorem above gives a new proof of the Yau-Tian-Donaldson conjecture for projective bundles once one shows that $K$-(poly)stability is equivalent to asymptotic Chow (poly)stability in this case. This is a subtle purely algebraic problem. On the other hand we notice that in this case the YTD conjecture follows by known results. Indeed by Ross and Thomas \cite[Theorem 5.13]{RosTho06} and Theorem \ref{thm::Gaud_cscK} the K-polystability of $\mathbb P(E)$ implies the existence of a cscK metric, and by \cite{Mab08, Mab09} if $\mathbb P(E)$ admits a cscK metric, then it is $K$-polystable. 


\section[Blow-up of points]{Blow-up of points}\label{sec::blup}

Let $(M,L)$ be a polarized manifold of dimension $n \geq 2$. Let $S=\{p_1,\dots,p_s\} \subset M$ be a set of points and consider the blow-up $$\beta : \tilde M \to M$$ of $M$ at $S$ with exceptional divisors $E_j = \beta^{-1}(p_j)$. Let $m, \alpha_1,\dots,\alpha_s$ be positive integers such that the line bundle $\tilde L = \beta^*L^m \otimes \mathcal O(-\sum_{j=1}^s \alpha_jE_j)$ is ample on $\tilde M$. Moreover suppose given a $\mathbb C^*$-action on $M$ that fixes every $p_j$. This induces an action on $\tilde M$ that makes $\beta$ an equivariant map, and it is known that any $\mathbb C^*$-action on $\tilde M$ has this form. Since each exceptional divisor $E_j$ is invariant, the associated line bundle $\mathcal O(-E_j)$ has a natural linearization, so that $\tilde L$ is a linearized line bundle. Finally let $\omega$ be a $S^1$-invariant K\"ahler form representing $c_1(L)$, and let $\phi$ be a moment map for $\omega$. The choice of $\phi$ is equivalent to fix a linearization of the given action on $L$. Finally let $a_\ell(M,L)$ and $b_\ell(M,L)$ be defined as in \eqref{eq::asym_chi(V,A^k)} and \eqref{eq::asym_w(V,A^k)}.

\begin{thm}\label{thm::Fl_blowup}
In the situation above, assume that $(M,L)$ is asymptotically Chow polystable. Let $\lambda(p_j)$ be the total weight of the isotropy action on $T_{p_j}M$. We have 
$$ \Chow(\tilde M, \tilde L^k) = \frac{a_0(M,L^m) - \sum_{j=1}^s \alpha_j^n/n! }{\chi(M,L^{mk})- \sum_{j=1}^s \binom{n+\alpha_jk-1}{\alpha_jk-1}} \sum_{\ell=1}^n F_\ell(\tilde M, \tilde L) k^{n+1-\ell},$$
where  
\begin{equation}\label{eq::Fell_blup} 
F_\ell(\tilde M, \tilde L) = \frac{1}{D^2m^{\ell-1}} \sum_{j=1}^s \left( f_\ell \left( \frac{\alpha_j}{m} \right) \phi(p_j) - g_\ell \left( \frac{\alpha_j}{m} \right) \lambda(p_j) \right),
\end{equation}
$ D = \deg(M,L) - \sum_{j=1}^s \left(\frac{\alpha_j}{m}\right)^n > 0$, and $f_\ell, g_\ell$ are polynomial functions defined by
$$
f_\ell \left( x \right) = D s_{n-\ell} x^{n-\ell} - \left( n!\, a_\ell(M,L) - s_{n-\ell} \sum_{i=1}^s \left(\frac{\alpha_i}{m}\right)^{n-\ell} \right) x^n,
$$
$$
g_\ell(x) = \frac{D s_{n+1-\ell} x^{n+1-\ell} - x f_\ell(x)}{n+1},
$$
and $s_h$ are non-negative rational numbers, depending on $n$, generated by $\binom{n+x-1}{x-1} = \sum_{h \geq 0} \frac{s_h x^h}{n!}$.
\end{thm}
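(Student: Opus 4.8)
The plan is to compute the Hilbert function $\chi(\tilde M,\tilde L^k)$ and the weights $w(\tilde M,\tilde L^k)$ and then read everything off from Definition \ref{defn::CM-weight}. Since $\tilde M$ is smooth, the localization argument recalled before Proposition \ref{prop::higher_Futaki} gives $b_{n+1}(\tilde M,\tilde L)=0$, so the first displayed identity is nothing but the general expansion $\Chow=\tfrac{a_0}{\chi}\sum_\ell F_\ell k^{n+1-\ell}$, once one knows $a_0(\tilde M,\tilde L)=a_0(M,L^m)-\sum_j\alpha_j^n/n!$; the substance of the statement is therefore the formula \eqref{eq::Fell_blup} for the $F_\ell(\tilde M,\tilde L)$.

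First I would compare cohomology on $\tilde M$ and on $M$. For $k\gg 0$ one has $R^q\beta_*\tilde L^k=0$ for $q>0$ and $\beta_*\tilde L^k=L^{mk}\otimes\prod_j\mathcal I_{p_j}^{\alpha_j k}$, so that $H^0(\tilde M,\tilde L^k)$ consists of the sections of $L^{mk}$ vanishing to order at least $\alpha_j k$ at $p_j$, while ampleness of $L$ and of $\tilde L$ kills the higher cohomology of every sheaf in sight. This yields a $\mathbb C^*$-equivariant short exact sequence whose cokernel is the jet space $\bigoplus_j\bigoplus_{i=0}^{\alpha_j k-1}S^i(T^*_{p_j}M)\otimes L^{mk}_{p_j}$, giving at once $\chi(\tilde M,\tilde L^k)=\chi(M,L^{mk})-\sum_j\binom{n+\alpha_j k-1}{\alpha_j k-1}$ and the analogous relation for $w$. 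The weight of the jets is then local: the line $L_{p_j}$ carries the moment-map value $\phi(p_j)$, so $L^{mk}_{p_j}$ contributes $mk\,\phi(p_j)$, whereas the total weight of $S^i(T^*_{p_j}M)$ is $-\tfrac{i}{n}\binom{n+i-1}{i}\lambda(p_j)$ because the weights on $T_{p_j}M$ sum to $\lambda(p_j)$. Summing over $i<\alpha_j k$ by the identities $\sum_{i<d}\binom{n+i-1}{i}=\binom{n+d-1}{n}$ and $\sum_{i<d}i\binom{n+i-1}{i}=n\binom{n+d-1}{n+1}$ expresses $w(\tilde M,\tilde L^k)$ through $\phi(p_j)$, $\lambda(p_j)$ and $w(M,L^{mk})$.

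The polystability hypothesis enters next. The product configuration of the given $\mathbb C^*$-action has zero Chow weight at every exponent, so $\Chow(M,L^{mk})\equiv 0$; equivalently $F_\ell(M,L^m)=0$ for all $\ell$ and $b_{n+1}(M,L^m)=0$, i.e.\ $w(M,L^{mk})=\hat\phi_m\,k\,\chi(M,L^{mk})$ as polynomials in $k$, where $\hat\phi_m=b_0(M,L^m)/a_0(M,L^m)$. Because $F_\ell(\tilde M,\tilde L)$ is independent of the linearization (Proposition \ref{prop::higher_Futaki}), I may normalize the moment map so that only the base coefficients $a_\ell(M,L)$ and the local data at the $p_j$ remain. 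Substituting into $\chi(\tilde M,\tilde L^k)\,\Chow(\tilde M,\tilde L^k)=w(\tilde M,\tilde L^k)-\hat{\tilde\phi}\,k\,\chi(\tilde M,\tilde L^k)$ with $\hat{\tilde\phi}=b_0(\tilde M,\tilde L)/a_0(\tilde M,\tilde L)$ and $a_0(\tilde M,\tilde L)=m^nD/n!$, the base contributions collapse and leave only terms supported at the points.

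Finally, to obtain \eqref{eq::Fell_blup} I would extract the coefficient of $k^{n+1-\ell}$, expanding $\binom{n+\alpha_j k-1}{\alpha_j k-1}$ and $\binom{n+\alpha_j k-1}{n+1}$ through the series $\binom{n+x-1}{x-1}=\sum_h s_h x^h/n!$ (whose coefficients are non-negative, since $\binom{n+x-1}{x-1}=\tfrac{1}{n!}\prod_{i=0}^{n-1}(x+i)$) and using $a_\ell(M,L^m)=m^{n-\ell}a_\ell(M,L)$; this regroups the coefficient exactly into $\tfrac{1}{D^2m^{\ell-1}}\sum_j\left(f_\ell(\alpha_j/m)\phi(p_j)-g_\ell(\alpha_j/m)\lambda(p_j)\right)$, with $D>0$ forced by $a_0(\tilde M,\tilde L)>0$. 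I expect this last bookkeeping to be the main obstacle: one must track carefully the powers of $m$ and the factors of $D$ produced by $\hat{\tilde\phi}$ and by the normalization, and match the resulting polynomials in $\alpha_j/m$ with $f_\ell$ and $g_\ell$; it is elementary but lengthy. A secondary delicate point is the equivariant step above, namely checking that the natural linearization of $\mathcal O(-E_j)$ is compatible with realizing the cokernel as jets carrying weight $mk\,\phi(p_j)$ on the line factor and the tangent weights on $S^i(T^*_{p_j}M)$.
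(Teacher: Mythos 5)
Your proposal is correct and follows essentially the same route as the paper: pushing $\tilde L^k$ down to $M$, using the ideal-sheaf exact sequence to identify the cokernel with the jet spaces at the $p_j$ and compute its dimension and weight, invoking asymptotic Chow polystability to normalize the linearization so that the base weight $w(M,L^{mk})$ drops out, and then extracting the coefficient of $k^{n+1-\ell}$ via the expansion $\binom{n+x-1}{x-1}=\sum_h s_h x^h/n!$. The only differences are cosmetic (e.g.\ you express the symmetric-power weight via the per-degree sum and hockey-stick identities where the paper writes $\binom{n+\alpha_jk-1}{\alpha_jk-2}$ directly).
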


\begin{proof}
We start fixing some notations. Let $\mathcal I_j \subset \mathcal O_M$ be the ideal sheaf of the point $p_j$, and let $Z\subset M$ be the subscheme cut out by $\mathcal I_Z = \mathcal I_1^{\alpha_1}\cdots \mathcal I_s^{\alpha_s}$.  For $k$ sufficiently large we have $\beta_*\mathcal O(-k\sum_{j=1}^s \alpha_j E_j) = \mathcal I_Z^k$ and $R^i \beta_*\mathcal O(-k\sum_{j=1}^s \alpha_j E_j)=0$ for all $i>0$. Thus, by projection formula we get $H^0(\tilde M, \tilde L^k) \simeq  H^0(M,\mathcal I_Z^k \otimes L^{mk})$ equivariantly and $H^i(M,\mathcal I_Z^k \otimes L^{mk})=0$ for all $i>0$, hence we have to calculate the dimension and the weight of $H^0(M,\mathcal I_Z^k \otimes L^{mk})$. To this end we consider the equivariant exact sequence 
$$ 0 \to \mathcal I_Z^k \otimes L^{mk} \to L^{mk} \to L^{mk} / \mathcal I_Z^k \otimes L^{mk} \to 0,$$ 
and the induced sequence \begin{equation}\label{eq::exev_seq} 0 \to H^0(M,\mathcal I_Z^k \otimes L^{mk}) \to H^0(M,L^{mk}) \to H^0(M, L^{mk} / \mathcal I_Z^k \otimes L^{mk}) \to 0,\end{equation} which is equivariant and exact by considerations above. The sheaf  $L^{mk} / \mathcal I_Z^k \otimes L^{mk}$ is supported on $S$, and we have the equivariant isomorphism
$$ H^0(M,L^{mk} / \mathcal I_Z^k \otimes L^{mk}) \simeq \bigoplus_{j=1}^s L_{p_j}^{mk} \otimes \left( \bigoplus_{d=0}^{\alpha_j k - 1} \sym^d T^*_{p_j}M\right), $$
 where $\mathbb C^*$ acts on $L_{p_j}$ with weight $-\phi(p_j)$ and on $T^*_{p_j}M$ with total weight $-\lambda(p_j)$. Thus, by easy calculations we get 
$$ \chi(M,L^{mk} / \mathcal I_Z^k \otimes L^{mk}) = \sum_{j=1}^s \binom{n+\alpha_j k-1}{\alpha_j k-1}, $$ 
and 
\begin{equation}\label{eq::wei_quot}
w(M,L^{mk} / \mathcal I_Z^k \otimes L^{mk}) = -\sum_{j=1}^s \left[\binom{n+\alpha_j k-1}{\alpha_j k-1}  mk\,\phi(p_j) + \binom{n+\alpha_j k -1}{\alpha_j k -2} \lambda(p_j) \right].
\end{equation}

Moreover, by \eqref{eq::exev_seq} and the ampleness of $L$ and $\tilde L$ we obtain 
\begin{eqnarray}
\label{eq::chi_blup1} \chi(\tilde M, \tilde L^k) &=& \chi(M,L^{mk}) - \sum_{j=1}^s \binom{n+\alpha_j k-1}{\alpha_j k-1} \\
\nonumber &=& \sum_{\ell=0}^n a_\ell(M,L) (mk)^{n-\ell} - \sum_{j=1}^s \sum_{h=0}^n  \frac{s_h}{n!} (\alpha_j k)^h \\
\label{eq::chi_blup2} &=& \sum_{\ell=0}^n \left( a_\ell(M,L)m^{n-\ell} - \frac{s_{n-\ell}}{n!}\sum_{j=1}^s \alpha_j^{n-\ell} \right) k^{n-\ell}.
\end{eqnarray}

Since $(M,L)$ is asymptotically Chow polystable, we can fix a linearization on $L$ such that $b_\ell(M,L)=0$ for all $\ell=0,\dots,n$. Indeed, we can choose the moment map $\phi$ such that $b_0(M,L) = \int_M \phi \, \omega^{n}/n!=0$, and in this case the asymptotic Chow polystability implies $w(M,L^k)=0$ for all $k \gg 0$. Thus by \eqref{eq::exev_seq} and \eqref{eq::wei_quot} we have
\begin{eqnarray}
\nonumber w(\tilde M, \tilde L^k) &=& \sum_{j=1}^s \left[\binom{n+\alpha_j k-1}{\alpha_j k-1}  mk\,\phi(p_j) + \binom{n+\alpha_j k -1}{\alpha_j k -2} \lambda(p_j) \right]\\
\label{eq::w_blup} &=& \sum_{\ell=0}^n \left( \frac{s_{n-\ell}}{n!} m \sum_{j=1}^s \alpha_j^{n-\ell} \phi(p_j) + \frac{s_{n-\ell}-s_{n+1-\ell}}{(n+1)!} \sum_{j=1}^s \alpha_j^{n+1-\ell} \lambda(p_j) \right) k^{n+1-\ell}.
\end{eqnarray}

Thanks to smoothness of $\tilde M$, the first formula of the statement follows easily from definition of $\Chow(\tilde M, \tilde L^k)$ and $F_\ell(\tilde M, \tilde L^k)$, and the identity
$$ 
\frac{a_0(\tilde M, \tilde L)}{\chi(\tilde M, \tilde L^k)} = \frac{a_0(M,L^m) - \sum_{j=1}^s \alpha_j^n / n!}{\chi(M,L^{mk})- \sum_{j=1}^s \binom{n+\alpha_jk-1}{\alpha_jk-1}},
$$
which in turn is a straightforward consequence of equations \eqref{eq::chi_blup1}, \eqref{eq::chi_blup2} and $s_n=1$.

Moreover, in order to get formula \eqref{eq::Fell_blup}, we calculate 
\begin{eqnarray*}
F_\ell(\tilde M, \tilde L^k) &=& \frac{a_0(\tilde M, \tilde L) b_\ell(\tilde M, \tilde L) - a_\ell(\tilde M, \tilde L) b_0(\tilde M, \tilde L)}{a_0(\tilde M, \tilde L)^2} \\
&=& \left( \frac{1}{\deg(M,L) -\sum_{j=1}^s \left( \frac{\alpha_j}{m} \right)^n } \right)^2 \frac{a_0(\tilde M, \tilde L) b_\ell(\tilde M, \tilde L) - a_\ell(\tilde M, \tilde L) b_0(\tilde M, \tilde  L)}{(m^n/n!)^2}
\end{eqnarray*}
\begin{eqnarray*}
&=& D^{-2} \left[ \left( \deg(M,L) - \sum_{i=1}^s \left(\frac{\alpha_i}{m}\right)^n \right) \left( s_{n-\ell} \sum_{j=1}^s \left(\frac{\alpha_j}{m}\right)^{n-\ell} \phi(p_j) + \right. \right. \\
&&+\left. \frac{s_{n-\ell}-s_{n+1-\ell}}{n+1} \sum_{j=1}^s \left(\frac{\alpha_j}{m}\right)^{n+1-\ell} \lambda(p_j) \right) - \left( n!\,a_\ell(M,L) - s_{n-\ell} \sum_{i=1}^s \left(\frac{\alpha_i}{m}\right)^{n-\ell} \right) \times \\
&& \times \left. \left( \sum_{j=1}^s \left(\frac{\alpha_j}{m}\right)^n \phi(p_j) + \frac{1}{n+1} \sum_{j=1}^s \left(\frac{\alpha_j}{m}\right)^{n+1} \lambda(p_j) \right) \right] m^{1-\ell} \\
&=& D^{-2} \left( \sum_{j=1}^s f_\ell \left(\frac{\alpha_j}{m}\right)\phi(p_j) - \sum_{j=1}^s g_\ell \left(\frac{\alpha_j}{m}\right)\lambda(p_j) \right)m^{1-\ell},
\end{eqnarray*}
whence \eqref{eq::Fell_blup} follows easily.
\end{proof}

\begin{rem}
From formula \eqref{eq::Fell_blup} follows easily the asymptotic of the invariants $F_\ell$ in the so-called ``adiabatic limit'', $m \to \infty$. In particular, for $\ell =1$ we have 
$$
F_1(\tilde M, \tilde L) = \frac{n(n-1)}{2 \deg(M,L)} \sum_{j=1}^s \left(\frac{\alpha_ j}{m}\right)^{n-1} \phi(p_j) + O \left(\frac{1}{m^n}\right).
$$

For each invariant point $p \in S$, the Chow weight of the zero dimensional polarized scheme $(p,L_p)$ is clearly zero by definition. On the other hand, we define 
$$
w_\CW (p,L) = \phi(p) - \frac{b_0(M,L)}{a_0(M,L)},
$$
and we extend by linearity to each zero-cycle $\sum_{j=1}^s c_j p_j$, with $c_j \in \mathbb Q$. Thus, recalling that $(M,L)$ is assumed to be asymptotically Chow polystable, we get
$$
F_1(\tilde M, \tilde L) = \frac{n(n-1)}{2 \deg(M,L)} w_\CW\left(  \sum_{j=1}^s \alpha_ j^{n-1} p_j,L\right) \frac{1}{m^{n-1}} + O \left(\frac{1}{m^n}\right).
$$
The same formula has been proved by Stoppa for a wider class of test configurations of blown-up manifolds \cite{Sto10}. 
\end{rem}

Theorem \ref{thm::Fl_blowup} suggests that asymptotically Chow polystability is in general lost after blowing up if some non-trivial $\mathbb C^*$-action still remains. Indeed it is easy to find $\alpha_1,\dots,\alpha_s$ and $m$ such that $F_\ell(\tilde M, \tilde L)\neq 0$, at least for some $\ell$. Refining this simple remark we can show by an example that blowing up a cscK asymptotically Chow polystable manifold can produce integral cscK classes which are asymptotically Chow unstable.

The strategy is the following. Given a K\"ahler manifold $M$, we know that the extremal cone is open in the K\"ahler cone by Lebrun and Simanca \cite{LebSim94}, and every extremal class with zero Futaki invariant is in fact cscK. Thus the set $\mathcal C$ of the cscK classes is open in the locus $\mathcal F$ where $F_1=0$. Moreover, when $M$ is a cscK manifold blown-up at some points, $\mathcal C$ is non-empty under mild conditions by Arezzo and Pacard \cite{ArePac09}. If we can prove that the locus $\mathcal Z$ where $F_1=\dots=F_n=0$ is a proper Zariski closed of $\mathcal F$, and $\mathcal C \setminus \mathcal Z$ has at least a rational point, we are done.
Clearly we look for a rational class in $\mathcal C \setminus \mathcal Z$ because it has to be (a rational multiple of) the first Chern class of a line bundle. In the example below we will show that rational points of $\mathcal F$ are in fact dense in the Euclidean topology.

\begin{ex}
Let $(M,L)$ be the complex projective plane $\mathbb P^2$ polarized with the hyperplane bundle, and let $S=\{p_1, p_2, p_3, p_4\}$ be a set of points of $M$, and suppose that the $p_j$'s are all but one aligned. Up to a projectivity we can suppose $p_1=(1,0,0)$, $p_2=(0,1,0)$, $p_3=(0,a,b)$, and $p_4=(0,a,-b)$ with $ab\neq0$.
Let $(\tilde M, \tilde L)$ be the blow-up of $M$ along $S$ endowed with the polarization defined as in the general case above $\tilde L=\beta^*L^m \otimes \mathcal O\left(-\sum_{j=1}^4 \alpha_j E_j\right)$. 

The unique (up to scaling) holomorphic vector field of $\tilde M$ generates the $\mathbb C^*$-action ${\rm diag}(t^2,t^{-1},t^{-1})$ on $\mathbb P^2$, thus the condition $F_\ell(\tilde M, \tilde L)=0$ in the K\"ahler cone is equivalent to a (homogeneous polynomial) equation in the variables $(m,\alpha)$.

By direct calculations from formula \eqref{eq::Fell_blup} we get $F_1(\tilde M, \tilde L)=\deg(\tilde M, \tilde L)^{-2} \psi_1(m,\alpha)$, where
\begin{multline}\label{eq::f=0}
\psi_1(m,\alpha)=(2\alpha_1-\alpha_2-\alpha_3-\alpha_4)(m^3-3\alpha_1^2m) -(2\alpha_1^2-\alpha_2^2-\alpha_3^2-\alpha_4^2) (3m^2-3\alpha_1m)+\\
+( 2\alpha_1^3-\alpha_2^3-\alpha_3^3-\alpha_4^3)(3m-\alpha_1-\alpha_2-\alpha_3-\alpha_4),
\end{multline}
and $F_2(\tilde M, \tilde L)=\deg(\tilde M, \tilde L)^{-2} \psi_2(m,\alpha)$, where
\begin{multline}
\psi_2(m,\alpha)=(2\alpha_1-\alpha_2-\alpha_3-\alpha_4)(m^2-\alpha_1^2-\alpha_2^2-\alpha_3^2-\alpha_4^2)+\\
-2(2\alpha_1^2-\alpha_2^2-\alpha_3^2-\alpha_4^2)m+2( 2\alpha_1^3-\alpha_2^3-\alpha_3^3-\alpha_4^3).
\end{multline}
Since $\psi_1$ and $\psi_2$ are irreducible and in particular $\psi_2$ does not divide $\psi_1$, the locus $\psi_2\neq 0$ is non empty and Zariski open in the locus $\psi_1=0$. On the other hand, is easy to see that $(1,1,0,0,0) \in \mathbb P^4$ is a triple point for the quartic threefold defined by $\psi_1=0$, thus the set of rational solutions of the equation $\psi_1(m,\alpha)=0$ is dense in the euclidean topology in the locus $\psi_1=0$. Since the latter is non empty by Arezzo and Pacard \cite[Example 7.3]{ArePac09}, we get that there are infinitely many integral cscK classes on $\tilde M$ which are asymptotically Chow unstable.
\end{ex}

\begin{ex}
The argument above shows that all rational cscK classes on the blow-up of $\mathbb P^2$ along a set $S=\{p_1, p_2, p_3\}$ of three non-aligned points are asymptotically Chow stable. Indeed in this case the null loci of $F_1$ and $F_2$ are the same.

More precisely, assume without loss of generality that $p_1=(1,0,0)$, $p_2=(0,1,0)$, $p_3=(0,0,1)$ and let $(\tilde M, \tilde L)$ be the blow-up of $\mathbb P^2$ along $S$ endowed with the polarization $\tilde L$ defined as above.
A set of generators for the space of holomorphic vector fields of $\tilde M$ is now given by the fields associated with the $\mathbb C^*$-actions ${\rm
diag}(t,t^{-1}, 1)$ and ${\rm diag}(1,t,t^{-1})$ on $\mathbb P^2$. After a straight calculation involving formula \eqref{eq::Fell_blup}, we see that $F_1(\tilde M, \tilde L)=0$ if and only if 
$$
\left\{
\begin{array}{lll}
(\alpha_1 - \alpha_2) (m-\alpha_1-\alpha_2-\alpha_3)(m^2 - 2 \alpha_1 m - 2 \alpha_2 m + \alpha_3 m + \alpha_1^2 + \alpha_1 \alpha_2 + \alpha_2^2) = 0 \\
(\alpha_2 - \alpha_3) (m-\alpha_1-\alpha_2-\alpha_3)(m^2 - 2 \alpha_3 m - 2 \alpha_2 m + \alpha_1 m + \alpha_3^2 + \alpha_3 \alpha_2 + \alpha_2^2) = 0
\end{array}
\right.
$$

while $F_2(\tilde M, \tilde L)=0$ is equivalent to
$$
\left\{
\begin{array}{lll}
(\alpha_1 - \alpha_2) (m-\alpha_1-\alpha_2-\alpha_3)(m - \alpha_1 - \alpha_2 + \alpha_3) = 0 \\
(\alpha_2 - \alpha_3) (m-\alpha_1-\alpha_2-\alpha_3)(m + \alpha_1 - \alpha_2 - \alpha_3) = 0
\end{array}
\right.
$$

In order for $L_m$ to be ample, by Nakai's criterion, it must be $m>0$, $\alpha_i >0$, and $m + \alpha_j>\alpha_1+\alpha_2+\alpha_3$ for all $j=1,2,3$, thus one immediately sees that $F_1(\tilde M, \tilde L)=0$ if and only if $F_2(\tilde M, \tilde L)=0$, and both are zero if and only if either $\alpha_1=\alpha_2=\alpha_3$ or $\alpha_1+\alpha_2+\alpha_3=m$ (compare with \cite[Example 3.2]{LebSim94}).
\end{ex}

\end{document}